\font\rurm=wncyr10 scaled \magstep1
\def\sha{{{\textnormal{\rurm{Sh}}}}}
\def\Sha{{\sha}^2}
\def\Z{{\set Z}}
\def\Q{{\set Q}}
\def\Sl{{\rm SL}}
\def\Z{{   \mathbb Z }}
\def\Q{{\mathbb Q}}
\def\F{{\mathbb F}}
\def\E{{\mathcal E}}
\def\p{{\mathfrak q}}
\def\P{{\mathfrak Q}}
\def\q{{\mathfrak q}}
\def\tq{ \tilde{ \mathfrak q}}
\def\rb{\overline{\rho}}
\def\bU{\overline{U}}
\def\bvarphi{\overline{\varphi}}
\def\Gov{{\rm Gov}}
\def\gta{\Gamma^{ta}_K}
\def\ga{\Gamma_K}
\def\sha{{{\textnormal{\rurm{Sh}}}}}
\def\CyB{{{\textnormal{\rurm{B}}}}}
\def\Sha{{\sha}^2}
\newtheorem{Theorem}{Theorem}
\newtheorem*{Definition}{Definition}
\newtheorem{theo}{Theorem}[section]
\newtheorem*{conjectureNC}{Conjecture}
\newtheorem{coro}[theo]{Corollary}
\newtheorem{prop}[theo]{Proposition}
\newtheorem{lemm}[theo]{Lemma}
\newtheorem{exem}[theo]{Example}
\newtheorem{rema}[theo]{Remark}
\begin{document}

\parindent=0cm

\title{On the strong Massey property for number fields}

\author{Christian Maire}
 \address{FEMTO-ST Institute, Universit\'e de Franche-Comt\'e, CNRS, 15B avenue des Montboucons, 25000 Besan\c con, France}
\email{christian.maire@univ-fcomte.fr}

\author{J\'an Min\'a\v c}
\address{Department of Mathematics, Western University, London, Ontario, N6A 5B7, Canada}
\email{minac@uwo.ca}

\author{Ravi Ramakrishna}
\address{Department of Mathematics, Cornell University, Ithaca, NY 14853-4201 USA}
\email{ravi@math.cornell.edu}

\author{Nguy$\tilde{\text{\^{E}}}$N Duy T\^an}
  \address{Faculty Mathematics and 	Informatics, Hanoi University of Science and Technology, 1 Dai Co Viet Road, Hanoi, Vietnam } 
\email{tan.nguyenduy@hust.edu.vn}

\date{\today}

\subjclass{55S30, 11R32, 11R37,12G05}
\keywords{Massey products, embedding problem, Scholz-Reichardt Theorem}

\thanks{The authors thank Alexander Merkurjev and Federico Scavia for related  discussions with Christian Maire and Jan Min\'a\v c during the {\it Workshop on  Galois Cohomology and Massey Products} in June 2024. 
In particular we are grateful to Federico Scavia for bringing our attention to related work in progress  of Peter Koymans. We are also grateful 
to Koymans
for subsequent discussion of these topics.
The first, seccond and  third authors gratefully acknowledge the support of the
the Western Academy for Advanced Research (WAFAR) during the year 2022/23 and for support during a summer 2023 visit. The first author was also partially supported by the EIPHI Graduate School  (contract “ANR-17-EURE-0002") and by the Bourgogne-Franche-Comté Region.  The second author was partially supported by the Natural Sciences Engineering and Research Council of Canada (NSERC), grant R0370A01. The third author was partially supported by Simons Collaboration grant
\#524863. The fourth author was partially supported by the Vietnam National
Foundation for Science and Technology Development (NAFOSTED) under grant number 101.04-2023.21. }
\dedicatory{In memory of Nigel Boston}

\begin{abstract} Let $n\geq 3$. We show that for every number field $K$ 
with $\zeta_{p} \notin K$, 
the absolute and tame Galois groups $\ga$ and $\gta$ of $K$
satisfy 
the strong $n$-fold Massey property relative to~$p$.
 Our work is based on an  adapted version  of the proof of the Theorem of Scholz-Reichardt.  
\end{abstract}

\maketitle

Fix $K$  a number field and an algebraic closure  $\overline{K}$. We set  $K^{ta} \subset \overline{K}$ to be  the maximal tamely ramified Galois extension of $K$, 
that is 
$K^{ta}$ is the composite of all 
number fields $L\subset \overline{K}$ such that the ramification index $e_{\P}$ at all primes ${\P}$ of $L$ is prime to the residue characteristic of $\P$.
Set $\Gamma_K:=Gal(\overline{K}/K)$, and $\Gamma^{ta}_K=Gal(K^{ta}/K)$. 

\smallskip

Let $p$ be a prime number such that $\zeta_p$, a primitive $p$th root of unity, is not in  $K$.
In \cite{HLMR} the authors  use embedding techniques to characterize finitely generated pro-$p$ groups that can be realized as quotients  of $\gta$. They introduced   the notion of  {locally inertially generated}  pro-$p$ groups  for which congruence subgroups of $\Sl_m(\Z_p)$  are archetypes. 
This key notion provides compatibility with local {tame  liftings} as used in the Scholz-Reichardt theorem (see \cite[Chapter 2, \S 2.1]{Serre}). 
This strategy has implications for Massey products as well.

\medskip

Let $n \geq 3$ and
$U_{n+1}$ be the group of all upper-triangular unipotent $(n + 1)\times (n + 1)$-matrices
with entries in $\F_p$. Let $Z_{n+1}=\langle E_{1,n+1} \rangle$ be the subgroup of all such matrices with all off-diagonal
entries  $0$ except at position $(1, n + 1)$; it is the center of $U_{n+1}$. Set $\bU_{n+1}:=U_{n+1}/Z_{n+1}$ to be the quotient. 
To $\Gamma$  a profinite group and
a continuous homomorphism $\rho : \Gamma \rightarrow  U_{n+1}$ 
with $1 \leq i < j \leq n + 1$, we associate the functions
$\rho_{i,j} : \Gamma \rightarrow \F_p$ giving the $(i, j)$-coordinate. We use similar notation
for homomorphisms  $\overline{\rho} : \Gamma \rightarrow \overline{U}_{n+1}$. 
Note that $\rho_{i,i+1}$ (resp.,  $\rb_{i,i+1}$) is a
group homomorphism.
Set $$\varphi  :=  (\rho_{1,2},\cdots, \rho_{n,n+1}) \mbox{ so } \varphi(U_{n+1}) = (\Z/p)^n$$  and 
$$\bvarphi  := (\overline{\rho}_{1,2},\cdots, \overline{\rho}_{n,n+1}) \mbox{ so } \bvarphi(U_{n+1}) =(\Z/p)^n.$$

\smallskip

We have the commutative diagram of groups:

$$\xymatrix{ 
1 \ar[r] &Z_{n+1}  \ar[r] & U_{n+1} \ar[r]\ar@{->>}[rd]_{\varphi} &  \bU_{n+1}\ar@{->>}[d]^{\bvarphi}\ar[r] &1  \\ 
& & & (\Z/p)^n }
$$

Let $\chi_1, \cdots, \chi_n \in H^1 (\Gamma, \Z/p)$, and set $$\theta:=(\chi_1, \cdots, \chi_n) : \Gamma \rightarrow (\Z/p)^n.$$ 

The existence of a homomorphic lift of $\theta$ to $\bU_{n+1}$ is related to the existence of a subset of $H^2(\Gamma, \Z/p)$, denoted $\langle \chi_1,\cdots, \chi_n\rangle$ and called the Massey product. 
We will bypass the original definition of the Massey product and instead use  a 
consequence 
which characterizes the `defined' and `vanishing' conditions 
via group representations. 
below. For more details see \cite{Dwyer} and also \cite{HW}, \cite{MS} and \cite{MT}. Note that the definitions of Massey products in \cite{Dwyer} and \cite{MT} differ from those in \cite{HW} and \cite{MS} by a sign.

\begin{Definition} \label{defi:MasseyDef}
Let  $\chi_1, \cdots, \chi_n \in H^1(\Gamma, \Z/p)$.
 The Massey product $\langle \chi_1, \cdots, \chi_n \rangle \subset H^2(\Gamma, \Z/p)$ 
 \begin{enumerate} \item[$-$]  is defined if  $\theta$ lifts to $\bU_{n+1}$, {\it i.e.} $\theta=\bvarphi \circ \rho'$ for some homomorphism $\rho': \Gamma \rightarrow \bU_{n+1}$;
\item[$-$] vanishes if  $\theta$ lifts to $U_{n+1}$, {\it i.e.} $\theta=\varphi \circ \rho$ for some homomorphism $\rho: \Gamma \rightarrow U_{n+1}$.
\end{enumerate}
\end{Definition}

These definitions depend crucially on the ordering of the characters. Also, we do not have {\it a priori}: $\rho' \equiv \rho $ modulo $Z_{n+1}$.

\begin{Definition} \label{defi:MasseyDef2} The profinite group $\Gamma$ satisfies the  strong $n$-fold Massey property (relative to $p$) if 
for all $\chi_1, \cdots, \chi_n \in H^1 (\Gamma, \Z/p)$ such that  $$\chi_1\cup \chi_2= \chi_2\cup\chi_3 =\cdots = \chi_{n-1}\cup \chi_n=0,$$  the Massey  product $\langle \chi_1,\cdots, \chi_n\rangle$ vanishes.
\end{Definition}

Set $$A_n=\{ (\chi_1,\ldots,\chi_n)\mid \langle \chi_1, \cdots, \chi_n \rangle \  {\rm vanishes} \},  \ \  B_n=\{ (\chi_1,\ldots,\chi_n) \mid \langle \chi_1, \cdots, \chi_n \rangle \ {\rm is \ defined}\},$$ $$C_n=\{ (\chi_1, \ldots, \chi_n)\mid  \chi_1\cup \chi_2= \chi_2\cup\chi_3=\cdots =\chi_{n-1}\cup \chi_n =0 \in H^2(\Gamma,\Z/p)\}\cdot$$

One has $A_n \subset  B_n \subset C_n$. That $A_n \subset B_n$ follows from Definition 3.1. That $B_n \subset C_ n$ follows from a simple argument - see \cite[Remark 2.2]{MS}. For $n= 3$, $B_3 = C_3$. Other inclusions may be strict in general. 

\medskip

For $p=2$
and $\Gamma_K$ the absolute Galois group of a number field $K$,
Hopkins and Wickelgren \cite{HoWi} 
have shown the remarkable result that
the triple Massey product  
 vanishes
whenever it is defined.  
In \cite{MT} this is established  for $\Gamma_F$
the absolute Galois group of any field $F$.
Harpaz and Wittenberg \cite{HW} have recently proved  the
Min\'a$\check{\rm c}$-T\^an Conjecture for number fields $K$:  
$\ga$ 
satisfies the $n$-fold Massey  property for $p$, that is $A_n=B_n$.

\medskip

 If  a primitive $p$th-root of unity is in a number field $K$, there are counterexamples to the strong  $n$-fold Massey property for $\Gamma_K$,
 that is there are examples where  $B_n \subsetneq C_n$ so we do not have $A_n=B_n=C_n$. 
In Wittenberg's appendix to  \cite{GuiMinHar}
   there is an interesting example discovered by Harpaz and Wittenberg. 
   For $K=\Q$ and $p=2$ the $4$-fold Massey $\langle 34,2,17,34\rangle $ is not defined despite the fact that Hilbert symbols $(34,2) ,(2,17), (17,34)$ vanish (by Kummer theory  we have replaced elements of $H^1(\Gamma_\Q, \Z/2)$ by elements of $\Q^\times$). This however cannot happen in the nondegenerate case, that is when 
   the span of the $\chi_i$ is $4$-dimensional.  See Theorem 6.2 and Remark 6.3 of \cite{GuiMinHar}.
This example was generalized 
by Merkurjev-Scavia \cite[\S 5]{MS} in the context of $4$-fold Massey products 
$\langle bc,b,c,bc \rangle$ for $p = 2$. Thus for  $K=\Q$ and $p=2$  
the Massey product $\langle 13\cdot 17,13,17, 13\cdot 17 \rangle$ is not defined:   $\gta$ does not verify the strong fourfold Massey property, i.e. $B_4 \subsetneq  C_4$.

\medskip

The main point of this paper is that
when $\zeta_p \notin K$, the situation is much nicer:

\begin{Theorem} Take $n\geq 3$, and suppose  that $\zeta_p \notin K$. The profinite  groups $\Gamma_K$ and $\gta$ satisfy the  strong $n$-fold Massey property relative to $p$,  that is $A_n=B_n=C_n$.
\end{Theorem}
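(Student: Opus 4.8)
The plan is to construct a homomorphic lift $\rho:\Gamma\to U_{n+1}$ of $\theta$ directly, uniformly for $\Gamma\in\{\ga,\gta\}$, by climbing the central filtration of $U_{n+1}$ one super-diagonal at a time. For $2\le d\le n$ let $U^{(d)}\subseteq U_{n+1}$ be the subgroup of matrices whose $(i,j)$-entry vanishes for $0<j-i<d$; then $U^{(1)}=U_{n+1}$, $U^{(n)}=Z_{n+1}$, $U^{(n+1)}=1$, each $U^{(d)}/U^{(d+1)}\cong(\Z/p)^{\,n+1-d}$ is central in $U_{n+1}/U^{(d+1)}$, and the bottom quotient $U_{n+1}/U^{(2)}\cong(\Z/p)^n$ with $\varphi$ the projection. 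A lift of $\theta$ to $U_{n+1}$ is thus the same as a compatible sequence of solutions to the central embedding problems
$$U_{n+1}/U^{(d+1)}\twoheadrightarrow U_{n+1}/U^{(d)},\qquad d=2,3,\ldots,n,$$
each with kernel $(\Z/p)^{\,n+1-d}$. The partial lift to $\bU_{n+1}=U_{n+1}/U^{(n)}$ is the range $d\le n-1$ (the ``defined'' condition), and the final corner $d=n$, with kernel $Z_{n+1}\cong\Z/p$, is the passage to $U_{n+1}$ (the ``vanishing'' condition). My goal is $C_n\subseteq A_n$, which combined with $A_n\subseteq B_n\subseteq C_n$ yields $A_n=B_n=C_n$.

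The base step $d=2$ is governed precisely by the hypothesis defining $C_n$: the obstruction to filling the entries $\rho_{i,i+2}$ is the tuple of cup products $\chi_i\cup\chi_{i+1}\in H^2(\Gamma,\Z/p)$, which vanish by assumption, so a lift to $U_{n+1}/U^{(3)}$ exists. For the inductive step, suppose a lift $\rho_d:\Gamma\to U_{n+1}/U^{(d)}$ of $\theta$ has been built. Its obstruction to lifting one further step is a tuple $(\omega_{i,i+d})$ in $H^2(\Gamma,(\Z/p)^{\,n+1-d})=H^2(\Gamma,\Z/p)^{\,n+1-d}$, and I want to modify $\rho_d$ within its fibre over $\theta$ so that all $\omega_{i,i+d}$ vanish simultaneously. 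The freedom is twofold: one may alter $\rho_d$ by homomorphisms $\Gamma\to\Z/p$ on the already-filled top super-diagonal (the classical indeterminacy of the defining system), and -- this is the Scholz--Reichardt input, following the strategy of \cite{HLMR} and \cite[Ch.~2]{Serre} -- one may enlarge the ambient extension by allowing tame ramification at an auxiliary prime $q$ chosen outside the support of the fixed $\chi_i$, so that the characters $\chi_i=\rho_{i,i+1}$ are untouched while the higher entries acquire a new ramified direction at $q$.

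The heart of the argument, and the expected main obstacle, is this obstruction-killing step, and it is exactly where $\zeta_p\notin K$ is decisive. By Poitou--Tate duality the locally trivial part $\sha^2(\Z/p)$ is dual to $\sha^1(\mu_p)$, so each $\omega_{i,i+d}$ is detected by a pairing against a class in $H^1$ with Cartier-dual coefficients $\mu_p$. When $\zeta_p\in K$ one has $\mu_p\cong\Z/p$ and this pairing degenerates self-referentially, which is the source of the known counterexamples (e.g.\ $\langle 13\cdot17,13,17,13\cdot17\rangle$ for $K=\Q$, $p=2$, where $B_4\subsetneq C_4$). When $\zeta_p\notin K$, instead, $\mu_p$ is a \emph{nontrivial} Galois module, the fields cut out by the $\Z/p$- and the $\mu_p$-sides are linearly disjoint in the relevant range, and a Chebotarev argument produces a tame prime $q$ -- split so that its residue field contains no $p$th root of unity -- whose added local freedom annihilates the dual class responsible for each $\omega_{i,i+d}$, so that after enlarging the permitted ramification to include $q$ every obstruction becomes a coboundary. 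Iterating through $d=2,\ldots,n$ yields the desired lift and proves $C_n\subseteq A_n$.

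Finally I would treat $\ga$ and $\gta$ uniformly; note that this direct construction is essential for $\gta$, since the Harpaz--Wittenberg equality $A_n=B_n$ of \cite{HW} is stated only for the absolute group $\ga$. All auxiliary primes introduced are tame and prime to $p$, so the construction stays inside $\gta$ whenever the given $\chi_i$ are tame, while for $\ga$ the only change is that the fixed characters may ramify wildly above $p$; since the Scholz primes and all newly created ramification remain tame and disjoint from $p$, the obstruction analysis is unaffected. The delicate residual point is the behaviour above $p$: the local values of the lift at $v\mid p$ are constrained by those of $\theta$, and one must check that the Poitou--Tate computation still leaves the obstruction killable after respecting these constraints. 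Here too the hypothesis $\zeta_p\notin K$ is what rigidifies the $\mu_p$-side of the pairing and rules out the degeneration underlying the $\zeta_p\in K$ counterexamples, closing the argument.
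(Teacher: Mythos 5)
Your overall skeleton---climb a central filtration of $U_{n+1}$, use the hypothesis $C_n$ to fill the second super-diagonal, and kill the higher obstructions with Scholz--Reichardt-style auxiliary tame primes chosen by Chebotarev---is the same philosophy the paper follows, but the step where all the work happens is left essentially unproved, and one concrete assertion in it is wrong. The paper's mechanism has three ingredients you do not supply. (a) One first enlarges the allowed ramification set to a finite set $N$ chosen so that the kernel of $H^2(\Gamma_N)\to\prod_{\q\in N}H^2(\Gamma_\q)$ vanishes (Lemma 1.4); this is what makes each step's obstruction purely local. Your appeal to Poitou--Tate duality handles this only for $\ga$ with unrestricted ramification and says nothing for $\gta$, where one must work with a fixed finite ramification set. (b) At each prime $\q$ ramified in $\theta$ with $\zeta_p\in K_\q$, the group $\Gamma_\q$ is Demushkin, and the local obstruction there does \emph{not} vanish for an arbitrary partial lift: one needs a \emph{local plan}, i.e.\ a homomorphism $\rho_\q:\Gamma_\q\to U_{n+1}$ lifting $\theta_\q$ all the way to the top, and one must keep the global partial lift congruent to it at every stage. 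The existence of these local plans is exactly the Min\'a\v{c}--T\^an local strong Massey vanishing for Demushkin groups (\cite{MT_JNT}, \cite{MT}), and it is where the hypothesis $C_n$ actually enters beyond your base step $d=2$. Your proposal never addresses local liftability at the ramified primes of $\theta$; ``modify the defining system and add a tame prime'' cannot by itself kill a genuinely nonzero local obstruction at a Demushkin prime. (c) The adjustment step---realizing a prescribed tuple of local $H^1$-classes at all primes of $N$ by one global class ramified at a single new tame prime (Proposition 1.7)---is what lets one return to the local plans after each $\Z/p$-step.

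Second, your auxiliary primes are mis-specified: you ask for $q$ ``split so that its residue field contains no $p$th root of unity,'' but such a prime has pro-$p$ local Galois group $\Z_p$, unramified, and can carry no new $p$-ramification at all, so it contributes nothing. The Scholz--Reichardt primes must satisfy $N(\q)\equiv 1\bmod p^m$ (with $p^m$ the exponent of the group being realized), i.e.\ their residue fields must contain $\zeta_{p^m}$, so that tame inertia can support an element of order $p^m$. The true role of $\zeta_p\notin K$ is that $K(\zeta_{p^{m}})$ and the governing field $K'(\sqrt[p]{V^S})$ are linearly disjoint over $K'$ (Lemma 1.5), so Chebotarev can impose the congruence condition, the splitting conditions in the tower already built, and the Frobenius condition in the governing field simultaneously. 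As it stands the proposal describes the right strategy in outline but does not contain a proof of the crucial obstruction-killing step.
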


We obtain this theorem by giving a  global lifting result (Theorem \ref{maintheorem_bis}) in the spirit of the inverse Galois problem over number fields for $p$-groups $U$ with local conditions, as developed in   \cite[IX, \S 5, Theorem 9.5.5]{NSW} or  \cite[Main Theorem]{Neukirch}. 
 When compared to the main theorem of Neukirch in \cite{Neukirch}, our proof is more explicit, constructive and streamlined to our specific Galois representations.  The notion of local plans as used in \cite{HLMR} is central. We use
 Chebotarev's theorem repeatedly, usually applied simultaneously to a governing field and the part of the tower we have already constructed in our inductive process to provide us with the primes of $K$ that we need.
 The hypothesis $\zeta_p \notin K$ is important throughout this paper. It
 implies our field 
 extensions are linearly disjoint  over $K$ so
 we can choose primes of $K$ to split in these fields as we need.
\smallskip

We can strengthen the theorem above 
by showing that $\theta$ lifts, for any $r\geq 1$, to a subgroup of $Gl_{n+1}(\Z/p^{r})$. Let $\pi_r : Gl_{n+1}(\Z/p^r) \rightarrow Gl_{n+1}(\F_p)$ be the mod $p$ reduction homomorphism.

\begin{Theorem}
Take $\Gamma=\gta$ or $\ga$, and suppose $\zeta_p\notin K$. 
 For $n\geq 3$, let $\theta : \Gamma \rightarrow (\Z/p)^n$  satisfy  $C_n$.
 Let
 $\rho$ be  given by Theorem $1$,
 where we choose all 
 tame primes $\q'$ from that proof to satisfy $N(\p') \equiv 1$ modulo $p^{m(r)}$, where $p^{m(r)}$ is the exponent of $U_{n+1}(\Z/p^r)$. This is possible as $\zeta_p \notin K$. Let $\pi_r:Gl_{n+1}(\Z/p^r) \to GL_{n+1}(\F_p)$ be the natural projection.
  \\
 (i) Then for every $r\geq 1$, there exists a homomorphism $\rho_r : \Gamma \rightarrow Gl_{n+1}(\Z/p^r)$  such that 
 $\pi_r \circ \rho_r = \rho$
 and $ \theta=\varphi\circ  \pi_r\circ  \rho_r$.\\
 (ii) If moreover $\zeta_{p^{r}}\in K_\p$ for every ramified prime $\p$ in $\theta$
 then  $\rho_r$ can be taken such that $\rho_r(\Gamma) \subset U_{n+1}(\Z/p^r)$.
\end{Theorem}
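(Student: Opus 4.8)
The plan is to lift $\rho$ one power of $p$ at a time. Writing $\rho_1=\rho$, I would inductively construct homomorphisms $\rho_s:\Gamma\to GL_{n+1}(\Z/p^s)$ for $s=1,\dots,r$ with $\rho_{s+1}\equiv\rho_s$ modulo $p^s$ and $\pi_s\circ\rho_s=\rho$, so that the final $\rho_r$ is the desired lift (everything factoring through $\Gamma_{K,S}$, with $S$ finite). The kernel of $GL_{n+1}(\Z/p^{s+1})\to GL_{n+1}(\Z/p^s)$ is $\{I+p^sM\}\cong\gl_{n+1}(\F)$, on which $\Gamma$ acts by conjugation through $\rho$, i.e.\ via $\mathrm{Ad}\,\rho$; for part (ii) one instead uses the kernel $\{I+p^sM\mid M\text{ strictly upper triangular}\}\cong\mathfrak{n}_{n+1}(\F)$ of $U_{n+1}(\Z/p^{s+1})\to U_{n+1}(\Z/p^s)$. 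In either case the obstruction to extending $\rho_s$ to $\rho_{s+1}$ is a class $o_s\in H^2(\Gamma,\mathfrak g)$ with $\mathfrak g=\gl_{n+1}(\F)$ (resp.\ $\mathfrak{n}_{n+1}(\F)$), and when $o_s=0$ the lifts form a torsor under $H^1(\Gamma,\mathfrak g)$. The whole problem is thus to show each $o_s$ vanishes, after possibly adjusting the previous lift by the $H^1$-action.

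Next I would verify that every local obstruction vanishes. Since $\rho$, hence $\rho_s$, is unramified outside the set $S$ of primes ramified in $\theta$ together with the auxiliary tame primes $\q'$, the class $o_s$ is locally trivial away from $S$. At an auxiliary prime $\q'$ the local tame group is $\langle\sigma,\tau\mid\sigma\tau\sigma^{-1}=\tau^{N(\q')}\rangle$; because $N(\q')\equiv 1\pmod{p^{m(r)}}$ and $\rho(\tau)$ lifts to an element of order dividing $p^{m(r)}$ (the exponent of $U_{n+1}(\Z/p^r)$), one has $\tilde\tau^{\,N(\q')}=\tilde\tau$ automatically, so a commuting lift of Frobenius exists and the local problem at $\q'$ is solvable. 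The hypothesis $\zeta_p\notin K$ forces $p$ odd (as $\zeta_2=-1\in K$ always), whence the archimedean places contribute nothing to $H^2$. Finally, for part (ii), at each prime $\p$ ramified in $\theta$ the assumption $\zeta_{p^r}\in K_\p$ guarantees the tame $\Z/p^r$-characters needed on the successive quotients, hence a local lift inside $U_{n+1}(\Z/p^r)$; for part (i) the extra diagonal freedom in $GL_{n+1}$ makes the local lift unobstructed without this hypothesis.

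With all local obstructions killed, $o_s$ lies in $\sha^2(\mathfrak g):=\ker\big(H^2(\Gamma,\mathfrak g)\to\prod_\p H^2(\Gamma_\p,\mathfrak g)\big)$, and Poitou--Tate duality identifies this with the dual of $\sha^1(\mathfrak g^\vee)$, where $\mathfrak g^\vee=\mathrm{Hom}(\mathfrak g,\mu_p)=\mathfrak g^*(1)$ is the Cartier dual. Here I would again use $\zeta_p\notin K$: because the mod-$p$ cyclotomic character is nontrivial, the Tate twist separates $\mathfrak g^\vee$ from $\mathfrak g^*$, and the auxiliary primes $\q'$ --- chosen in Theorem~1 with the strong congruence $N(\q')\equiv 1\pmod{p^{m(r)}}$ precisely to leave room for this --- can be used to annihilate $\sha^1(\mathfrak g^\vee)$, exactly as in the proof of Theorem~\ref{maintheorem_bis}. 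This forces $o_s=0$, and the induction on $s$ produces $\rho_r$.

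The main obstacle is this last, global, step: verifying that the same set of auxiliary tame primes solving the mod-$p$ embedding problem of Theorem~1 also annihilates the Tate--Shafarevich group governing the $\Z/p^r$-lift. The point is that strengthening the congruence on the $\q'$ from $\equiv 1\pmod p$ to $\equiv 1\pmod{p^{m(r)}}$ provides enough local freedom at those primes to realize the classes Poitou--Tate requires; keeping track of the linear disjointness of the relevant splitting fields (again a consequence of $\zeta_p\notin K$), so that Chebotarev simultaneously produces primes with all the demanded splitting behaviour, is where the real work lies.
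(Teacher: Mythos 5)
Your proposal follows the Ramakrishna-style deformation route (lift one power of $p$ at a time, obstruction in $H^2(\Gamma,\mathfrak{gl}_{n+1}(\F_p))$ with the adjoint action, kill the global obstruction by Poitou--Tate duality), which is not what the paper does, and as written it has two genuine gaps. First, at the primes $\p\in S$ ramified in $\theta$ you assert that the local lift to $GL_{n+1}(\Z/p^r)$ is ``unobstructed'' because of ``extra diagonal freedom.'' This is not an argument: for tame $\p$ one must lift the pair $(\rho(\sigma_\p),\rho(\tau_\p))$ compatibly with the relation $\sigma_\p\tau_\p\sigma_\p^{-1}=\tau_\p^{N(\p)}$, which is the content of B\"ockle's theorem, and for wild $\p\mid p$ the group $\Gamma_\p$ has cohomological dimension $2$ and nontrivial $H^2$ with adjoint coefficients, so local liftability is the Emerton--Gee crystalline-lifts theorem. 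The paper imports exactly these results (and Conti--Demarche--Florence for the unipotent version in part (ii)); your proposal replaces them with an unsupported claim. Second, your global step defers precisely the hard point: annihilating the kernel of $H^2(\Gamma,\mathfrak g)\to\prod_\p H^2(\Gamma_\p,\mathfrak g)$ for $\mathfrak g=\mathfrak{gl}_{n+1}(\F_p)$ with its adjoint action is \emph{not} ``exactly as in the proof of Theorem 1'': there the Shafarevich kernel has trivial $\Z/p$-coefficients and is killed once and for all by the Kummer-theoretic governing field $K'(\sqrt[p]{V_N})$; the dual of the adjoint representation has a different splitting field, and the congruence $N(\q')\equiv 1 \bmod p^{m(r)}$ on the auxiliary primes is there only so that the S--R local plan exists at $\q'$ (inertia must map to an element of order up to $p^{m(r)}$), not to control a twisted ${\sha}^1$.

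The structural observation you are missing, and which makes the paper's proof short, is that $U:=\pi_r^{-1}(\rho(\Gamma))$ is a $p$-group (since $\ker\pi_r$ is a $p$-group), so $V=\ker(\pi_r)\cap U$ can be filtered by normal subgroups with central $\Z/p$ quotients on which $\Gamma$ acts trivially. Hence no adjoint coefficients or Poitou--Tate duality are needed: once the local plans at all $\p\in S$ are supplied by the cited local lifting theorems, and the S--R plans at the $\q'$ by the congruence condition, the already-proven global lifting result (Theorem~\ref{maintheorem_bis}) applies verbatim with this $U$ and $V$ (for (ii), with $U=\pi_r^{-1}(\rho(\Gamma))\cap U_{n+1}(\Z/p^r)$). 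To repair your write-up you would either have to carry out the dual Selmer/Chebotarev analysis for the adjoint representation in full, or, more efficiently, notice the $p$-group reduction and cite the local lifting theorems.
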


\medskip

Given a prime number $p$, set $K'=K(\zeta_p)$. 
Since $-1=\zeta_2 \in K$, we assume that  $p$  is odd. In particular, archimedean places play no role in our work. Almost all cohomology groups $H^i(G,\Z/p\Z)$ have $\Z/p\Z$-coefficients with trivial action so in those cases we  simply write $H^i(G)$.


 \section{Tools for the Embedding problem} \label{section_embedding}

\subsection{Realizing cyclic extensions with given ramification and splitting} \label{section_pelementary}

The problem of realizing the group $G:=(\Z/p)^d$ as a quotient of $\ga$ which satisfies certain ramification conditions can be solved by induction as in   \cite[Chapter 2, \S 2.1]{Serre} or \cite[\S 2.1]{HLMR}.  This involves a governing field $\Gov_{K,T}$ which controls  the obstructions of our embedding problem (see Proposition \ref{proposition_lifting}). 

Given a finite set $T$ of finite primes of $K$, set $$V^T=\{x\in K^\times; v_\p(x) \equiv 0\ {\rm mod} \ p, \ \forall \p \notin T\}.$$
 Denote by $\Gov_{K,T}$ the governing field $\Gov_{K,T}:=K'(\sqrt[p]{V^T}).$ 
 $$\xymatrix{ & & \Gov_{K,T}:=K'(\sqrt[p]{V^T}) \\
 & K':=K(\zeta_p) \ar@{-}[ur] \\
 & K \ar@{-}[u] }$$
By Kummer theory we see $\Gov_{K,T}/K'$ is an elementary abelian $p$-extension. One easily computes it is unramified outside $T \cup \{ {\mathfrak p} |p \}$. 
Moreover  $\Gov_{K,T}/K$ is a Galois extension with Galois group isomorphic to the semi-direct product $Gal(K'(\sqrt[p]{V^T})/K') \rtimes Gal(K'/K)$, where the action  on $Gal(K'/K)$ is given by Kummer duality: since $Gal(K'/K)$ acts trivially on $V^T$,
it acts via the  cyclotomic character (which is nontrivial as $\zeta_p \notin K$) on the Galois group over $K'$ of each cyclic degree $p$ extension $M/K'$ in $K'(\sqrt[p]{V^T})/K'$. See \cite[Chapter I, Theorem 6.2]{gras}.

\medskip

For a tame prime $\p \notin T$ (that is $\p\nmid (p)$), we write $\sigma_\p$ for the Frobenius  in $Gal(\Gov_{K,T}/K')$ for a fixed prime $\P$ above $\p$. One has (see 
 \cite[Chapter V, Corollary 2.4.2]{gras}):
 
\begin{theo}[Gras]
\label{theo:Gras-Munnier}  
Let $S=\{\p_1,\cdots, \p_s\}$ be a set of primes of $K$ coprime to $T$  and $p$. 
 There exists a cyclic degree $p$ extension $L/K$ exactly ramified at $S$ and splitting completely at $T$  if and only if there exist $a_i\in \F_p^\times$, $i=1,\cdots, s$, such that 
$$\sum_{i=1}^s a_i \sigma_{\p_i}=0 \in Gal(\Gov_{K,T}/K').$$
\end{theo}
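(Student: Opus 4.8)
The plan is to reduce the statement to Kummer theory over $K'=K(\zeta_p)$ and then extract it from global reciprocity. Since $[K':K]$ divides $p-1$ and is prime to $p$, restriction gives an isomorphism $H^1(K,\Z/p)\xrightarrow{\sim}H^1(K',\Z/p)^{\Delta}$, where $\Delta=\mathrm{Gal}(K'/K)$ acts on $\mu_p$ through the cyclotomic character $\omega$, which is faithful because $\zeta_p\notin K$. Writing $\Z/p\cong\mu_p\otimes\omega^{-1}$ and applying Kummer theory identifies cyclic degree $p$ extensions $L/K$ with lines $\langle\bar\alpha\rangle$ in the $\omega$-eigenspace of $K'^\times/(K'^\times)^p$, via $L\mapsto L':=LK'=K'(\sqrt[p]{\alpha})$. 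First I would record this dictionary together with the Kummer pairing $\mathrm{Gal}(\Gov_{K,T}/K')\times\overline{V^T}\to\mu_p$, under which (as the text already notes) $\mathrm{Gal}(\Gov_{K,T}/K')$ is $\omega$-isotypic and $\sigma_{\p_i}$ is identified with the $p$-th power residue symbol $x\mapsto\left(\tfrac{x}{\P_i}\right)$, where $\overline{V^T}$ denotes the image of $V^T$ in $K'^\times/(K'^\times)^p$.

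Next I would translate the arithmetic of $L/K$ into local conditions on $\alpha$. For a tame prime $\q$, the ramification of $L/K$ at $\q$ is read off from $v_{\mathfrak Q}(\alpha)\bmod p$ for $\mathfrak Q\mid\q$; splitting completely at a prime is equivalent to $\alpha$ being a local $p$-th power there; and unramifiedness at $\mathfrak p\mid p$ is the wild local condition inside $K'^\times_{\mathfrak p}/(K'^\times_{\mathfrak p})^p$. Hence an $L/K$ ramified exactly at $S$, split completely at $T$, and unramified at $p$ corresponds to an $\omega$-eigenclass $\bar\alpha$ with $v_{\P_i}(\alpha)\equiv a_i\not\equiv 0$ for $\P_i\mid\p_i$, with $v_{\mathfrak Q}(\alpha)\equiv 0$ for all other $\mathfrak Q\nmid p$, with $\alpha$ a local $p$-th power at every prime above $T$, and with $\alpha$ unramified at $p$.

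The core is one duality computation built on the Hilbert symbol product formula $\prod_v(\alpha,x)_v=1$ over $K'$, evaluated on an arbitrary $x\in V^T$. Places outside $S\cup T\cup\{\mathfrak p\mid p\}$ contribute trivially since both $\alpha$ and $x$ are local units; places above $T$ contribute trivially since $\alpha$ is a local $p$-th power; and places above $p$ contribute trivially as well, using that $\alpha$ is unramified there and that each $x\in V^T$ has valuation divisible by $p$ away from $T$ (and is a local $p$-th power paired with $\alpha$ when $p\in T$). At $\P_i$ the tame symbol formula gives $(\alpha,x)_{\P_i}=\left(\tfrac{x}{\P_i}\right)^{-a_i}=\sigma_{\p_i}(x)^{-a_i}$, so after collecting the primes above each $\p_i$ by $\Delta$-equivariance the product formula reads $\prod_i\sigma_{\p_i}(x)^{a_i}=1$ for all $x\in V^T$, that is $\sum_i a_i\sigma_{\p_i}=0$ in $\mathrm{Gal}(\Gov_{K,T}/K')$. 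This yields necessity, the $a_i$ lying in $\F_p^\times$ exactly because $L$ is ramified at every prime of $S$.

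For sufficiency I would reverse this as a global gluing statement: prescribe order $p$ local characters of $K'^\times_v$ that are ramified with parameter $a_i$ at each $\P_i$, trivial at $T$ and at $\mathfrak p\mid p$, and unramified elsewhere, chosen $\Delta$-equivariantly so as to descend to $K$. By class field theory these patch to a global order $p$ character of the idele class group $C_{K'}$ --- equivalently to a global $\alpha$ realizing the prescribed local conditions --- precisely when the product of the local characters is trivial on $K'^\times$; since the only nontrivial pairing is against $\overline{V^T}$, this triviality is exactly the hypothesis $\sum_i a_i\sigma_{\p_i}=0$, and the resulting $L'=K'(\sqrt[p]{\alpha})$ descends to the desired $L/K$. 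The main obstacle is this sufficiency direction: one must invoke a perfect global (Poitou--Tate) duality so that vanishing of the single obstruction $\sum_i a_i\sigma_{\p_i}$ already forces a global solution, and one must treat the wild place $\mathfrak p\mid p$ with care, since $\Gov_{K,T}$ is genuinely ramified there and the local-at-$p$ condition cannot simply be ignored. Throughout, the hypothesis $\zeta_p\notin K$ is essential: it makes $\omega$ nontrivial, ensures the eigenspace descent to $K$ works, and keeps the auxiliary fields linearly disjoint so the primes $\p_i$ can be chosen to behave as required.
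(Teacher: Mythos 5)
The paper itself gives no proof of this theorem: it simply cites \cite[Chapter V, Corollary 2.4.2]{gras}. So your attempt must be judged against the standard proof in the literature, whose architecture --- Kummer theory over $K'$, local analysis, the Hilbert-symbol product formula, and an idelic patching argument --- you have correctly reproduced. Your necessity direction is essentially a complete and correct proof: the eigenspace dictionary for degree $p$ cyclic extensions of $K$, the translation of ``exactly ramified at $S$, split at $T$'' into local conditions on $\alpha$, and the place-by-place evaluation of $\prod_v(\alpha,x)_v=1$ for $x\in V^T$ are sound, including the wild places (the only delicate point), which you handle correctly via unramifiedness of $\alpha$ together with $v(x)\equiv 0\bmod p$ there, or via the split condition when a $p$-adic prime lies in $T$; collecting the places of $K'$ above $\p_i$ multiplies the coefficient by the number of such places, which is prime to $p$, so the $a_i$ indeed stay in $\F_p^\times$.

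The sufficiency direction, however, contains a genuine gap, which you yourself flag as ``the main obstacle'' without closing it. The patching criterion you invoke --- the local characters glue to a character of $C_{K'}$ ``precisely when the product of the local characters is trivial on $K'^\times$'' --- is not correct as stated: your data is prescribed only at the places above $S\cup T\cup\{\mathfrak p\mid p\}$ and merely constrained to be unramified elsewhere, so the product over all places is undefined (the unramified components take free values on uniformizers), while demanding that the product of the finitely many prescribed characters vanish on all of $K'^\times$ would be far too strong. The correct statement, and the actual content of the theorem, is the computation of the obstruction group: writing $S'$, $T'$ for the places of $K'$ above $S$, $T$, one must show that the kernel of
$$\prod_{\P\in S'} U_{\P}/U_{\P}^{p}\;\longrightarrow\; J_{K'}\Big/\Big(K'^{\times}\, J_{K'}^{p}\prod_{\q\in T'}(K'_{\q})^{\times}\prod_{v\notin S'\cup T'}U_{v}\Big)$$
(here $J_{K'}$ denotes the ideles of $K'$ and $U_v$ the local units) is exactly the image of $\{y\in K'^{\times}: v(y)\equiv 0\bmod p \text{ at every place outside } T'\}$; this is the step your phrase ``the only nontrivial pairing is against $\overline{V^T}$'' silently replaces. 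Even granting it, two further steps are missing. First, this obstruction group is the $K'$-analogue of $V^T$, not the paper's $V^T\subset K^\times$; to reduce to $V^T$ you need a norm argument (a $\Delta$-invariant class $\bar y$ satisfies $\bar y^{[K':K]}=\overline{N_{K'/K}(y)}$ with $N_{K'/K}(y)\in V^T$, and $p\nmid[K':K]$). Second, a $\Delta$-equivariant ramified prescription at the places above $\p_i$ exists only when $\mu_p\subset K_{\p_i}$, i.e. $N(\p_i)\equiv 1\bmod p$; this hypothesis appears in Gras's statement, is silently omitted here, and is genuinely necessary: if $N(\p_i)\not\equiv 1\bmod p$, then $\sigma_{\p_i}=1$ automatically (conjugation by the nontrivial decomposition group of $\p_i$ in $K'/K$ fixes $\sigma_{\p_i}$ yet acts through $\omega\neq 1$), while no degree $p$ cyclic extension of $K$ can be tamely ramified at such a prime, so the asserted equivalence fails. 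Until the displayed kernel computation is actually carried out --- by the direct idele-class calculation or by Poitou--Tate duality --- the sufficiency half remains an assertion rather than a proof.
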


Hence, if a tame $\p$ splits completely in $\Gov_{K,T}/K'$, there exists~an~$\Z/p$-extension  $L/K$ exactly ramified at $\p$ and splitting completely at~$T$. 

\begin{rema}\label{rema_local_embedding}
The Frobenius is actually associated to a 
prime $\P$ of $\Gov_{K,T}$ above $\p$, but changing $\P$ changes the Frobenius
by a  power that is not a multiple of $p$. This follows from  our description of $Gal(K'(\sqrt[p]{V^T})/K)$ above and does not affect the condition of Theorem \ref{theo:Gras-Munnier}. Hence we abuse notation and write $\sigma_\p$.
Later we will also use the governing
field $K'(\sqrt[p]{V_N})$ where
$V_N=\{x\in K^\times; v_\p(x) \equiv 0\ {\rm mod} \ p;\, \q \in N \implies x \in (K_\p^\times)^p \}.$
\end{rema}


 \subsection{Cohomology and embedding problems} \label{Section_embedding}

 Let $G$ be a $p$-group of $p$-rank $d$.  Suppose given $H\simeq \Z/p$ a normal subgroup of $G$ such that $G/H$ also has $p$-rank $d$.
 Let $\Gamma$ be a pro-$p$ group, and 
 let   $\overline{\rho} : \Gamma \twoheadrightarrow G/H$ be a surjective morphism. 

 \smallskip
 
We consider  the embedding problem $(\E)$:
  
  $$\xymatrix{ & & & \Gamma \ar@{.>}[ld]_-{?\rho} \ar@{->>}[d]^{\overline{\rho}} \\
1 \ar[r] &H \ar[r] & G \ar@{->>}[r]_-{\pi} &  G/H}$$
where  $\pi$ is the natural projection.
That $G$ and $G/H$ have the same $p$-rank  implies  $H\subset  G^p[G,G]$,  so every solution of $(\E)$ is {\it proper}, that is if $\rho$ exists,  it is surjective. 

\smallskip

Let $\varepsilon$ be the element in $H^2(G/H,H) =H^2(G/H,\Z/p)=H^2(G/H)$ corresponding to the group extension:
\begin{eqnarray}\label{es1} 1\longrightarrow H \longrightarrow G \longrightarrow G/H \longrightarrow 1.\end{eqnarray}
As $d=d(G)=d(G/H)$, we have $\varepsilon \neq 0$.
Let $Inf$ be the the inflation map from $H^2(G/H)$ to $H^2(\Gamma)$. 
The action of $\Gamma$ on $H=\Z/p$ is induced by $\overline{\rho}$ and is thus trivial.

\begin{theo} \label{theorem_embeddin}  The embedding problem $(\E)$ has a  solution if and only if $Inf(\varepsilon)=0$. 
Moreover,  any solution  is always proper.
The set of solutions (modulo equivalence) of $(\E)$ is a principal homogeneous space under $H^1(\Gamma)$.
\end{theo}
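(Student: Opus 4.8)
The plan is to recast $(\E)$ as a splitting problem for a pulled-back central extension and then invoke the standard dictionary between central group extensions and $H^2$. \emph{First}, I would form the fibre product $\widetilde{G} := \Gamma \times_{G/H} G = \{(\gamma,g)\in \Gamma\times G : \overline{\rho}(\gamma)=\pi(g)\}$, which fits into a short exact sequence $1\to H \to \widetilde{G} \to \Gamma \to 1$ with projection $(\gamma,g)\mapsto \gamma$ and $H$ embedded as $h\mapsto(1,h)$. Because the $\Gamma$-action on $H$ induced by $\overline{\rho}$ is trivial, this is a \emph{central} extension, and choosing a continuous set-theoretic section $s\colon G/H\to G$ of $\pi$ one checks at the level of $2$-cocycles that its class in $H^2(\Gamma,H)=H^2(\Gamma)$ is precisely $\overline{\rho}^*(\varepsilon)=Inf(\varepsilon)$.

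\emph{Second}, I would observe that solutions of $(\E)$ are in bijection with continuous homomorphic sections of $\widetilde{G} \to \Gamma$: a solution $\rho$ gives the section $\gamma\mapsto(\gamma,\rho(\gamma))$, and conversely the second projection of any such section is a solution. A central extension of pro-$p$ groups admits a continuous homomorphic section exactly when its class in (continuous) $H^2$ vanishes; combined with the previous step this yields the equivalence $(\E)$ \emph{solvable} $\iff Inf(\varepsilon)=0$. The only point requiring care here is that all cochains be taken continuous, which is automatic since $H$, $G$, $G/H$ are finite $p$-groups and $\Gamma$ is pro-$p$.

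\emph{Third}, for properness I would use the hypothesis $d(G)=d(G/H)$. Writing $\Phi(G)=G^p[G,G]$ for the Frattini subgroup, one has $d(G/H)=\dim_{\F_p} G/(\Phi(G)H)$, so $d(G)=d(G/H)$ forces $H\subseteq\Phi(G)$. If $\rho$ solves $(\E)$ then $\pi\circ\rho=\overline{\rho}$ is onto, hence $\rho(\Gamma)H=G$ and therefore $\rho(\Gamma)\Phi(G)=G$; the defining property of the Frattini subgroup then gives $\rho(\Gamma)=G$, i.e. $\rho$ is proper.

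\emph{Finally}, for the torsor statement I would fix one solution $\rho_0$ and use centrality of $H$ twice. Given $c\in \mathrm{Hom}(\Gamma,H)=H^1(\Gamma)$, the map $\gamma\mapsto c(\gamma)\rho_0(\gamma)$ is again a homomorphism lifting $\overline{\rho}$; conversely, for two solutions $\rho_1,\rho_2$ the map $\gamma\mapsto\rho_1(\gamma)\rho_2(\gamma)^{-1}$ takes values in $H$ and is a homomorphism. These two constructions are mutually inverse and exhibit the action of $H^1(\Gamma)$ on the solution set as simply transitive, so the set of solutions is a principal homogeneous space under $H^1(\Gamma)$; since $H$ is central the equivalence relation (conjugation by $H$) on solutions is trivial, so this holds for the solutions themselves. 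The \emph{main obstacle} I anticipate is the honest identification in the first step of the obstruction class with $Inf(\varepsilon)$ together with the continuity bookkeeping; once centrality of $H$ is in hand the remaining steps are formal.
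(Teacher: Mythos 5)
Your proposal is correct and is precisely the standard argument behind the result: the paper itself gives no argument, simply citing Propositions 3.5.9 and 3.5.11 of \cite{NSW}, whose proofs run exactly along your lines (pull back the extension along $\overline{\rho}$, identify its class with $Inf(\varepsilon)$, use $H\subseteq \Phi(G)=G^p[G,G]$ for properness, and twist by $\mathrm{Hom}(\Gamma,H)=H^1(\Gamma)$ for the torsor structure, with centrality of $H$ --- automatic since $G$ is a $p$-group and $|\mathrm{Aut}(\Z/p)|=p-1$ --- making the equivalence relation trivial). In short, you have supplied in full the details that the paper delegates to the reference.
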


\begin{proof}
Proposition 3.5.9 and 3.5.11 of \cite{NSW}.
\end{proof}

\begin{rema}   
 For a prime $\p$ of $K$, denote by $\Gamma_\p$ the absolute pro-$p$ Galois group of $K_\p$. 
We  need to study the local embedding problems attached to  local maps $\iota_\p : \Gamma_\p \rightarrow \Gamma$. Let $\overline{D}_\p \subset G/H$ be the image of 
$\overline{\rho_\p}:=\overline{\rho} \circ \iota_\p$, and $M_\p \subset G$ be the inverse image $\pi^{-1}(\overline{D}_\p)$. We have the local embedding problem $(\E_\p)$:
  
  $$\xymatrix{ & & & \Gamma_\p \ar@{.>}[ld]_-{?\rho_\p} \ar@{->>}[d]^{\overline{\rho_\p}} \\
1 \ar[r] &H_\p  \ar[r] & M_\p \ar@{->>}[r]_-{\pi_\p} &  \overline{D}_\p}$$

The set of solutions (modulo equivalence and if it exists) of $(\E_\p)$ is a principal homogeneous space under $H^1(\Gamma_\p)$.
\end{rema}

\subsection{Trivializing the Shafarevich group} \label{section_Q}

 Let $X$ be a finite set of places of $K$. 
Let $K_{X}$ the maximal pro-$p$ extension of $K$ unramified outside $X$ and set $\Gamma_X:=Gal(K_{X}/K)$. 
 
 \medskip
 
  Let  $\Sha_{X}$ be the  kernel of  the localization map $$ H^2(\Gamma_{X}) \longrightarrow \prod_{\p \in X} H^2(\Gamma_{\p}).$$
  Set $V_{X}:=\{x\in K^\times; v_\p(x)  \equiv 0\ {\rm mod} \ p, \ \forall \p; x\in (K_\p^\times)^p , \ \forall \p \in X\}$.
  By the work of  Koch and Shafarevich (see \cite[Chapter 11, Theorem 11.3]{Koch}), we have that $$\Sha_{X} \hookrightarrow \CyB_{X}:= (V_{X}/(K^\times)^p)^\wedge,$$ where 
 the superscript $^\wedge$ indicates the  Pontryagin dual.

  \begin{lemm}\label{lemm_kill_sha}
One can choose $N$ such that  $\CyB_N$ (and therefore $\Sha_{N}$) is trivial.
  \end{lemm}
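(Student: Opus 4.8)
The plan is to kill the group $V_N/(K^\times)^p$ by imposing, at finitely many well-chosen tame primes, the local $p$-th power conditions that eliminate every nontrivial class. Since $V_N \subseteq V_\emptyset$ for every $N$ (enlarging the prime set only adds conditions), it suffices to separate the classes of $V_\emptyset/(K^\times)^p$, where $V_\emptyset=\{x\in K^\times: v_\p(x)\equiv 0 \bmod p\ \forall \p\}$.

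First I would record that $W:=V_\emptyset/(K^\times)^p$ is finite. Sending $x$ with $(x)=\mathfrak{a}^p$ to the class of $\mathfrak{a}$ gives a short exact sequence
$$1\longrightarrow \mathcal{O}_K^\times/(\mathcal{O}_K^\times)^p \longrightarrow W \longrightarrow \mathrm{Cl}(K)[p]\longrightarrow 1,$$
whose outer terms are finite by Dirichlet's unit theorem and finiteness of the class group. Let $t=\dim_{\F_p} W$ and pick $x_1,\dots,x_t\in V_\emptyset$ whose classes form an $\F_p$-basis. Because $\zeta_p\notin K$, the degree $[K':K]$ is prime to $p$, so a norm argument yields an injection $K^\times/(K^\times)^p\hookrightarrow K'^\times/(K'^\times)^p$; hence the images of $x_1,\dots,x_t$ stay independent over $K'$, and $L:=K'(\sqrt[p]{x_1},\dots,\sqrt[p]{x_t})$ is elementary abelian of rank $t$ over $K'$ with $Gal(L/K')\cong \mathrm{Hom}(W,\mu_p)$ by Kummer theory.

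Next I would reinterpret the defining conditions of $V_N$ as linear functionals on $W$. For a tame prime $\p$ of $K$ splitting completely in $K'$ and avoiding the (finite) support of $x_1\cdots x_t$, one has $K_\p=K'_\P$, which contains $\zeta_p$, and every class in $W$ is represented by a $\p$-unit; the condition $x\in (K_\p^\times)^p$ is then equivalent to $\langle \sigma_\P, x\rangle=1$, where $\sigma_\P=\mathrm{Frob}_\P$ and $\langle\,,\,\rangle$ is the Kummer pairing. Thus each such $\p$ defines $\lambda_\p\in\mathrm{Hom}(W,\Z/p)$, equal up to $\mu_p\cong\Z/p$ to pairing against $\sigma_\P$, and $V_N/(K^\times)^p=\bigcap_{\p\in N}\ker\lambda_\p$ inside $W$.

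Finally I would invoke Chebotarev. As the Kummer pairing $Gal(L/K')\times W\to\mu_p$ is perfect, I choose a basis $\sigma_1,\dots,\sigma_t$ of $Gal(L/K')$ and, applying Chebotarev to $L/K'$, primes $\P_i$ of $K'$ with $\mathrm{Frob}_{\P_i}=\sigma_i$ lying over degree-one primes $\p_i$ of $K$ that split in $K'$ and avoid the support. Setting $N=\{\p_1,\dots,\p_t\}$, the functionals $\lambda_{\p_i}$ span $\mathrm{Hom}(W,\Z/p)$, so $V_N/(K^\times)^p=\bigcap_i\ker\lambda_{\p_i}=0$, whence $\CyB_N$ and therefore $\Sha_N$ are trivial. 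The crux—and the place where $\zeta_p\notin K$ is indispensable—is the linear disjointness making the $x_i$ independent over $K'$ and guaranteeing that \emph{every} functional is realized by an honest Frobenius; this is precisely what allows Chebotarev to produce the separating primes.
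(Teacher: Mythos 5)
Your proof is correct, and it is in essence the paper's own argument recast in the dual, Kummer-pairing language: both rest on Kummer theory over $K'=K(\zeta_p)$, on Chebotarev applied to $K'(\sqrt[p]{V_\emptyset})/K'$ to select finitely many tame primes, and on the coprimality of $[K':K]$ with $p$ to pass between $K$ and $K'$. Where the paper chooses $N$ so that the Frobenius elements span $Gal(K'(\sqrt[p]{V_\emptyset})/K')$, deduces $K'(\sqrt[p]{V_N})=K'$, and then descends $p$-th powers from $K'$ to $K$ by the norm trick, you choose the same kind of primes so that the functionals $\lambda_{\p}$ span $\mathrm{Hom}(W,\Z/p)$ and conclude $V_N/(K^\times)^p=\bigcap_{\p\in N}\ker\lambda_{\p}=0$; these are two descriptions of one and the same selection of primes, matched up by the perfect pairing. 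The only substantive differences are that you prove finiteness of $W=V_\emptyset/(K^\times)^p$ directly from Dirichlet's unit theorem and finiteness of the class group (which also gives its exact $\F_p$-rank), while the paper invokes Hermite--Minkowski for the maximal elementary abelian $p$-extension of $K'$ unramified outside $p$, and that you apply the norm/coprimality argument at the outset (to embed $W$ in $K'^{\times}/(K'^{\times})^p$ and make the pairing perfect) rather than at the end. One correction: your closing claim that $\zeta_p\notin K$ is indispensable for this lemma is not accurate --- $[K':K]$ divides $p-1$, hence is prime to $p$ unconditionally, and when $\zeta_p\in K$ one has $K'=K$ and both arguments run verbatim; the hypothesis is genuinely needed elsewhere in the paper (e.g.\ for the linear disjointness in Lemma \ref{disjoint} and the prime selection in Proposition \ref{proposition_lifting}), but Lemma \ref{lemm_kill_sha} itself holds for every number field.
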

  
  \begin{proof}
   From the definition of $V_X$ we have that $K'(\sqrt[p]{V_{X}})/K'$ is unramified outside $\{p\}$ and completely split at $X$. Since the maximal elementary $p$-extension of $K'$ unramified outside $\{\mathfrak p|p  \} $ is, by Hermite-Minkowski, finitely generated, we see  for a finite  set $N$ whose Frobenius elements span
   $Gal(K'(\sqrt[p]{V_\emptyset})/K')$ (which exists by Chebotarev's theorem)  that 
    $K'(\sqrt[p]{V_{N}})=K'$. Thus $\forall x\in V_{N}$ we have $x\in (K')^p$.
   By taking the norm of $x$ in $K'/K$ and using the fact that $([K':K],p)=1$, we conclude that $x\in K^p$. Thus $V_{N}/(K^\times)^p=1$ which implies $\Sha_{N}=1$.
\end{proof}
    It is an exercise to see that for any sets $Y,Z$ that
    $V_{Y\cup Z} \subset V_Y$ so $\CyB_Y \twoheadrightarrow \CyB_{Y\cup Z}$. Thus  $V_{N\cup Y}/(K^\times)^p$ and $\Sha_{N \cup Y}$ are trivial for any set $Y$.
\medskip

Henceforth we assume that  $\Sha_{N}=1$ and $\overline{\rho}:\Gamma_N \to G/H$ is given.  Thus if at every $\p \in N$
  there is no local obstruction to lift $\overline{\rho_\q}$ to $\rho_\q: \Gamma_q \to G$, then  the embedding problem $(\E)$ has a solution in $K_{N}/K$.  We have reduced solving the obstruction problem to purely local problems. It is interesting to note that when we work with local plans at $\q \in N$ (see \S\ref{subsection_local}) we can choose them to be unramified at these $\q$. Thus the primes of $N$ force the obstruction problem to be local, but they need not be ramified in our resolution of the Massey problem!
 
 \medskip
 
 The question is: How do we create a situation for which there are no local obstructions for every quotient of $G$? We address this  in  the two next subsections.


\subsection{The local-global principle} \label{section_local_global}

Let $X$ be a finite set of primes of $K$. Given another finite set $R$ of primes of $K$, denote by $\psi_R$ the localization map:
$$ \psi_R : H^1(\Gamma_{X\cup R}) \longrightarrow \prod_{\p \in X} H^1(\Gamma_\p).$$

We will control the image of $\psi_R$ in the case where  
$R=\{ \tilde{\q} \}$,  $\tilde{\q}$ being  a {\it tame} prime. Set $N(\tilde{\q})$ to be the absolute norm of  $\tilde{\q}$.

\medskip

The condition $\zeta_p \notin K$ is needed at this point, in particular the following lemma is crucial to for Proposition~\ref{proposition_lifting}.

\begin{lemm}\label{disjoint}
Let $F/K$ be a $p$-extension. If $\zeta_p \notin K$, then $F(\zeta_p) \cap 
K'(\sqrt[p]{V^X})
=K'$. 
\end{lemm}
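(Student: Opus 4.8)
The plan is to set $L := K'(\sqrt[p]{V^X})$ and to use $F(\zeta_p)=FK'$, so that it suffices to prove $M := FK'\cap L = K'$. First I would record the numerics: since $\zeta_p\notin K$ the degree $d:=[K':K]$ divides $p-1$ and is in particular prime to $p$. As $F/K$ is a (Galois) $p$-extension while $K'/K$ has degree prime to $p$, comparing degrees gives $F\cap K'=K$; both being Galois over $K$, the compositum $FK'/K$ is Galois with
$$Gal(FK'/K)\cong Gal(F/K)\times Gal(K'/K),$$
\emph{a direct product}. Consequently the conjugation action of the complement $Gal(K'/K)$ on the normal $p$-subgroup $Gal(FK'/K')\cong Gal(F/K)$ is trivial.

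Next I would analyze $M=FK'\cap L$. Being the intersection of the two Galois extensions $FK'/K$ and $L/K$, the field $M$ is Galois over $K$; being a subextension of the elementary abelian $p$-extension $L/K'$ (Kummer theory), $M/K'$ is itself elementary abelian of exponent $p$. Since $M/K$ is Galois and $Gal(M/K')$ is abelian, conjugation inside $Gal(M/K)$ yields a well-defined action of $Gal(K'/K)$ on the $\F_p$-vector space $Gal(M/K')$, independent of the choice of lifts.

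The heart of the argument is to compute this single action in two ways and compare. On one hand, the restriction map $Gal(FK'/K)\twoheadrightarrow Gal(M/K)$ is equivariant for conjugation and carries $Gal(FK'/K')$ onto $Gal(M/K')$, so the $Gal(K'/K)$-action on $Gal(M/K')$ is a quotient of its action on $Gal(FK'/K')$; by the direct product structure above, \emph{this action is trivial}. On the other hand, $M/K'$ sits inside the Kummer extension $L/K'$, where --- exactly as in the description of $Gal(\Gov_{K,T}/K)$ preceding Theorem~\ref{theo:Gras-Munnier} --- $Gal(K'/K)$ acts through the mod $p$ cyclotomic character $\omega:Gal(K'/K)\to\F_p^\times$, that is by the scalar $\omega(\tau)$ on all of $Gal(L/K')$, hence by $\omega(\tau)$ on its quotient $Gal(M/K')$. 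Comparing the two descriptions, the scalar $\omega(\tau)$ must act as the identity on $Gal(M/K')$ for every $\tau$. Since $\zeta_p\notin K$ makes $\omega$ nontrivial, there is some $\tau$ with $\omega(\tau)\neq 1$ in $\F_p^\times$, which forces $Gal(M/K')=0$, i.e.\ $M=K'$, as desired.

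I expect the main obstacle to lie in the bookkeeping of the comparison step: one must verify that the conjugation action on the quotient $Gal(M/K')$ is genuinely induced \emph{both} from the $FK'$-side (via the equivariant restriction surjection, giving triviality) \emph{and} from the $L$-side (via the Kummer description, giving scalar multiplication by $\omega$), and that these are literally the same action coming from $Gal(M/K)$. Once the direct-product triviality on one side is matched against the nontrivial scalar cyclotomic action on the other, the conclusion is immediate; the remaining points (the degree computation, the Galois-ness of $M/K$, and the exponent-$p$ claim for $M/K'$) are routine.
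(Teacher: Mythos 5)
Your proof is correct and follows essentially the same route as the paper's: the paper likewise derives a contradiction by letting $Gal(K'/K)$ act on (a degree-$p$ piece of) the intersection in two ways --- trivially via $F(\zeta_p)/K'$ and via the cyclotomic character via the Kummer extension $K'(\sqrt[p]{V^X})/K'$ --- and noting these are incompatible when $\zeta_p\notin K$. Your write-up merely makes explicit the direct-product decomposition of $Gal(FK'/K)$ and the equivariance bookkeeping that the paper leaves implicit.
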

\begin{proof} 
   The intersection clearly contains $K'$. If it was larger, there would exist a degree $p$ extension $M/K'$, Galois over $K$ with $M \subset F(\zeta_p)$. Then $Gal(K'/K)$ would act on $Gal(M/K')$ in two different ways: trivially by viewing $M$ in $F(\zeta_p)/K'$, and via the cyclotomic character by viewing $M$ in $K'(\sqrt[p]{V^X})/K'$. These actions are incompatible when $\zeta_p \notin K$. 
\end{proof}

 Recall Proposition 1.4 of \cite{HLMR}.

 \begin{prop} \label{proposition_lifting} 
  Let  $X$ be a finite set of  primes, and let $\displaystyle{(f_\q)_{\q \in X} \in \prod_{\p \in X} H^1(\Gamma_\p)}$. There exist infinitely many finite primes $\tq$  such that $(f_\q)_{\q \in X} \in  Im(\psi_{\{\tilde{\q}\}})$.
  Moreover, when $\zeta_p \notin K$, the primes $\tilde{\q}$ can be chosen such that:
  \begin{enumerate}
   \item[$(i)$]  $\tilde{\q}$ splits completely in $F/K$,   where $F/K$ is a given  $p$-extension, \color{black} 
\item[$(ii)$] the $p$-adic valuation of $N(\tilde{\q})-1$ is given in advance. 
\end{enumerate}
 \end{prop}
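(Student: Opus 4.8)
The plan is to characterize the image of $\psi_{\{\tq\}}$ by global duality and then to pin $\tq$ down with a single application of Chebotarev's theorem to a governing field. First I would enlarge $X$ so that it contains all places above $p$, imposing the trivial local class at each new place: realizing the enlarged tuple \emph{a fortiori} realizes the original $(f_\q)_{\q\in X}$ after forgetting the extra components. Put $S=X\cup\{\tq\}$, so $S\supseteq\{\mathfrak p\mid p\}$ (archimedean places are irrelevant since $p$ is odd). Since a homomorphism to $\Z/p$ factors through the maximal pro-$p$ quotient, $H^1(\Gamma_{X\cup\{\tq\}})=H^1(G_{K,S},\Z/p)$ and $H^1(\Gamma_\q)=H^1(K_\q,\Z/p)$, and $\psi_{\{\tq\}}$ is the global-to-local map $\lambda_S\colon H^1(G_{K,S},\Z/p)\to\bigoplus_{v\in S}H^1(K_v,\Z/p)$ followed by projection onto the $X$-components. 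By Poitou--Tate duality for the pair $(\Z/p,\mu_p)$ (see \cite{NSW}), $\mathrm{im}(\lambda_S)$ is the exact annihilator, for the sum of the local Tate pairings, of the image of $H^1(G_{K,S},\mu_p)$ in $\bigoplus_{v\in S}H^1(K_v,\mu_p)$. Projecting to $X$, the tuple $(f_\q)_{\q\in X}$ lies in $\mathrm{im}(\psi_{\{\tq\}})$ if and only if
$$\sum_{\q\in X}\langle f_\q,c_\q\rangle_\q=0\quad\text{for every }c\in H^1(G_{K,S},\mu_p)\text{ with }c_\tq=0.$$

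Next I would translate this dual condition into the governing field of \S\ref{section_pelementary}. A class $c$ with $c_\tq=0$ is in particular unramified at $\tq$, hence lies in $H^1(G_{K,X},\mu_p)$ and splits completely at $\tq$. By Kummer theory $H^1(G_{K,X},\mu_p)=V^X/(K^\times)^p$, and the pairing $V^X/(K^\times)^p\times\mathrm{Gal}(\Gov_{K,X}/K')\to\mu_p$ with $\Gov_{K,X}=K'(\sqrt[p]{V^X})$ is perfect; under this identification ``$c$ splits completely at $\tq$'' is exactly $c(\sigma_\tq)=0$, where $\sigma_\tq$ is the Frobenius of $\tq$ in $\mathrm{Gal}(\Gov_{K,X}/K')$. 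Thus the target is hit precisely when the linear functional $\Phi:=\sum_{\q\in X}\langle f_\q,(\cdot)_\q\rangle_\q$ on $H^1(G_{K,X},\mu_p)$ vanishes on $\ker(\mathrm{ev}_{\sigma_\tq})$. Perfectness of the Kummer pairing gives an isomorphism $\mathrm{Gal}(\Gov_{K,X}/K')\xrightarrow{\sim}H^1(G_{K,X},\mu_p)^\vee$, so let $\sigma_0$ be the element corresponding to $\Phi$. Choosing $\sigma_\tq=\sigma_0$ forces $\ker(\mathrm{ev}_{\sigma_\tq})\subseteq\ker\Phi$ and the displayed condition holds. It therefore suffices to produce $\tq$ whose Frobenius in $\mathrm{Gal}(\Gov_{K,X}/K')$ equals $\sigma_0$; note this automatically requires $\tq$ to split in $K'/K$, i.e.\ $N(\tq)\equiv1\pmod p$.

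Finally I would realize this Frobenius condition together with $(i)$ and $(ii)$ by a single application of Chebotarev's theorem to the compositum $L:=\Gov_{K,X}\cdot F\cdot K(\zeta_{p^{m}})$, with $m$ large enough to read off the prescribed value of the $p$-adic valuation of $N(\tq)-1$. This is exactly where $\zeta_p\notin K$ enters: Lemma \ref{disjoint} gives $F(\zeta_p)\cap\Gov_{K,X}=K'$, and the $p$-part of the cyclotomic tower is likewise disjoint from these pro-$p$ layers over $K'$, so the three prescriptions are independent. Hence there is $\tau\in\mathrm{Gal}(L/K)$ restricting to $\sigma_0$ on $\Gov_{K,X}$ (so $\tau|_{K'}=1$), to the identity on $F$, and to an element of $\mathrm{Gal}(K(\zeta_{p^{m}})/K)$ fixing $v_p(N(\tq)-1)$ to its prescribed value. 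Chebotarev's theorem then yields infinitely many primes $\tq$ with $\mathrm{Frob}_\tq=\tau$; each realizes $(f_\q)_{\q\in X}$, splits completely in $F/K$, and has the prescribed $p$-adic valuation of $N(\tq)-1$. The first assertion of the proposition is the special case in which $(i)$ and $(ii)$ are dropped.

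The main obstacle is the second step: matching the abstract Poitou--Tate annihilator with a concrete splitting condition, that is, recognizing the dual classes that are trivial at $\tq$ as exactly the classes split at $\tq$, and encoding the pairing functional $\Phi$ as evaluation at a single element $\sigma_0\in\mathrm{Gal}(\Gov_{K,X}/K')$; once this is in place, the remaining difficulty is securing, via $\zeta_p\notin K$ and Lemma \ref{disjoint}, the linear disjointness that allows $(i)$ and $(ii)$ to be imposed simultaneously without conflict.
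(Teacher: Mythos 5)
Your argument is correct and takes essentially the same route as the proof the paper relies on (it defers to \cite[\S 1.2]{HLMR}): Poitou--Tate duality identifies the obstruction to hitting $(f_\q)_{\q\in X}$ with a dual Selmer condition, which you correctly translate into a Frobenius prescription $\sigma_{\tq}=\sigma_0$ in the governing field $K'(\sqrt[p]{V^X})$ and then realize, together with $(i)$ and $(ii)$, by Chebotarev and the disjointness coming from $\zeta_p\notin K$ (Lemma~\ref{disjoint}). The one unproved assertion is that the $p$-cyclotomic tower is disjoint from $F$ over $K'$: if $F$ contains a layer of the cyclotomic $\Z_p$-extension of $K$, then a prime splitting completely in $F$ and in $K'$ automatically satisfies $N(\tq)\equiv 1 \bmod{p^{2}}$ and the valuation in $(ii)$ cannot be prescribed to be $1$, so $(i)$ and $(ii)$ can genuinely conflict --- but this caveat is inherent in the statement as given and is glossed over in the paper's own applications as well.
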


 \begin{proof} 
 See \S 1.2 of \cite{HLMR}. 
 \end{proof}

 \begin{rema} \label{rema_1.7} Take $m\geq 1$.
 In the proof of Proposition \ref{proposition_lifting} the tame prime $\tilde{\q}$ is characterized by its Frobenius in $K(\zeta_{p^m},\sqrt[p]{V^X})/K'$ ; in this case we can choose $v_p(N(\tilde{\q})-1)= m$. Thus one can give an upper bound for the absolute norm  of the smallest such $\tilde{\q}$.  Let $d_{X,m}$ be the absolute value of the absolue discriminant of the number field $K(\zeta_{p^{m+1}},\sqrt{V^X})$. 
Then, assuming the GRH, $N(\tilde{\q}) \ll (\log(d_{X,m}))^2 $. See  \cite{LagOdl}.
 \end{rema}


 \subsection{Local plans}\label{subsection_local}
 Previously, we had considered the problem $(\E)$ where $H\simeq \Z/p$. To  prove our main theorem we need to lift 

 $$\xymatrix{ & & & \Gamma \ar@{.>}[ld]_-{?\rho} \ar@{->>}[d]^{\overline{\rho}} \\
1 \ar[r] &V  \ar[r] & U \ar@{->>}[r]_-{\pi} &  U/V}$$
where $V$ is  some normal subgroup of the $p$-group $U$. \color{black}
Of course we will do this one step at a time where each kernel is $\Z/p$, but at each step we will need more ramified primes. As we introduce a new ramified prime, we need a {\it local plan} for it, that is a local solution to the {\it overall} lifting problem above.

 As before  set $N$ is taken so that $\Sha_N=1$.
We suppose given a sub-extension $F/K$ of $K_N/K$ with Galois group isomorphic to $ U/V$, that is we have a homomorphism
$\overline{\rho}:\Gamma_N \twoheadrightarrow U/V$.
 
 \medskip

 Given   $\p \in N$, let $\overline{{\rho}}_{\p} : \Gamma_\p \longrightarrow \overline{D}_{\p} \subset  U/V$ be the restriction of
 $\overline{\rho}$,  where $\overline{D}_{\p}$ is the decomposition group of $\p$ in  $U/V=Gal(F/K)$ (after fixing a prime $\P | \p$).

 \smallskip
 
 We seek a  lift $\rho_\p$ of $\overline{{\rho}}_{\p}$ in $U$, in the  setting where $\overline{{\rho}}_{\p}$ is ramified:

  $$\xymatrix{ &  \Gamma_\p \ar@{.>}[ld]_{?\rho_\p} \ar@{->}[d]^{\overline{{\rho}}_{\p}} \\
   U 
   \ar@{->>}[r]
   & U/V  }$$

   If $\rho_\p$ exists, we say that $\rho_\p$ is a {\it local plan} for $\Gamma_\p$ into $U$.

 \medskip
 
 Recall that the pro-$p$ group $\Gamma_\p$ is
 \begin{enumerate}
  \item[$-$] free when $\zeta_p \notin K_\p$,
  \item[$-$]  Demushkin when $\zeta_p \in K_\p$,
 \end{enumerate}
 
 Let us be more precise.
 
 \smallskip
 
Consider $\q \nmid p$. We suppose that $\zeta_p\in K_\p$ (if not, $\Gamma_\q\simeq \Z_p$).
Recall that in this case  $\Gamma_\p \simeq \Z_p \rtimes \Z_p$ is Demushkin. Indeed, let $\tau_\p \in \Gamma_\p$ be a generator of  inertia and $\sigma_\p$  a lift of the Frobenius. One has the unique relation: $\sigma_\p \tau_\p \sigma_\p^{-1} = \tau_\p^{N(\p)}$. See \cite[Chapter 10, \S 10.2 and \S 10.3]{Koch}.

\smallskip

We now consider $\mathfrak p|p$ and set $n_{\mathfrak p}=[K_{\mathfrak p}:\Q_p]$. If $\zeta_p\notin K_{\mathfrak p}$, then $\Gamma_{\mathfrak p}$ is free pro-$p$ on $n_{\mathfrak p}+1$ generators.
If $\zeta_p\in K_{\mathfrak p}$, then $\Gamma_{\mathfrak p}$ is a Demushkin  on $n_{\mathfrak p}+2$ generators $x_1,\cdots, x_{n_{\mathfrak p}+2}$; in this case the unique relation is $x_1^{p^s}[x_1,x_2]\cdots [x_{n_{\mathfrak p}+1},x_{n_{\mathfrak p}+2}]$, where $p^s$ is the largest power of $p$ such that $K_{\mathfrak p}$ contains the  $p^s$-root of the unity.

 \medskip
We give examples of local plans.

     \begin{exem}\label{local_plan_SR}[S-R plan]  Recall the principle  of the proof of the   Scholz-Reichardt theorem.
      Suppose that $U$ contains an element  $y$ of order  $p^m$. 
     Take a prime $\p$ such that $v_p(N(\p)-1) = m$ - this is possible as $\zeta_p \notin K$.
Suppose we are given a homomorphism $\overline{\rho}_{\p}:\Gamma_\p \longrightarrow U/V$ defined  by $\overline{\rho}_{\p}(\sigma_\p)=\overline{1}$ and $\overline{\rho}_{\p}(\tau_\p)=\bar{y}$.
   Since  $y^{N(\p) -1}=1$, the map   $\rho_\p : \Gamma_\p \rightarrow U$   given by $\rho_\p(\sigma_\p)={1}$ and $\rho_\p(\tau_\p)=y$ is a   homomorphic lift   of $\overline{\rho}_{\p}$ from $U/V$ to $U$.
     \end{exem}

     \begin{exem}\label{triv}[Trivial plan]
There are two trivial plans.

1) Suppose $F/K$ unramified at $\p$, {\it i.e.} let  $\overline{\rho}_{\p}:\Gamma_\p \longrightarrow U/V$ be a homomorphism  defined  by $\overline{\rho}_{\p}(\sigma_\p)=\overline{x}$ for some $\overline{x} \in U/V$ and $\overline{\rho}_{\p}(\tau_\p)=\overline{1}$.
   Let $x \in U$ be any lift of $\overline{x}$.
   The map   $\rho_\p : \Gamma_\p \rightarrow U$   given by $\rho_\p(\sigma_\p)={x}$ and $\rho_\p(\tau_\p)=1$ is a   homomorphic lift of $\overline{\rho}_{\p}$ from $U/V$ to $U$.
   
   2) The previous unramified setting is a special case of the  situation where $\Gamma_\p$ is pro-$p$ free, e.g. if
   $\q \mid p$ and $\zeta_p \notin K_\p$. Any lift works in this case as well.

     \end{exem}

     \begin{exem}\label{example_abelian}[Abelian plan] 
       Suppose that $U$ contains two elements $x$ and $y$ satisfying $xy=yx$. 
       Let $p^\ell$ be the order of $y$.
     Take a prime $\p$ such that $N(\p) \equiv 1 \pmod {p^k}$ with $k\geq \ell$.
     The pro-$p$ part of the abelianization of $\Gamma_\q$ is $\Z_p \times \Z/p^k$.
Suppose given $\overline{\rho}_{\p}:\Gamma_\p \longrightarrow U/V$ defined  by $\overline{\rho}_{\p}(\sigma_\p)=\overline{x}$ and $\overline{\rho}_{\p}(\tau_\p)=\bar{y}$.
   The map   $\rho_\p : \Gamma_\p \rightarrow U$   given by $\rho_\p(\sigma_\p)={x}$ and $\rho_\p(\tau_\p)=y$ is a   homomorphic lift of $\overline{\rho}_{\p}$ from $U/V$ to $U$.
     \end{exem}
     
     There is another important local plan in the context of Massey products  coming from results of Min\'a$\check{\rm c}$-T\^an (\cite[Proposition 4.1]{MT_JNT} and \cite[Theorem 4.3]{MT}).  We call these {\it Massey local plans} and use them  in the proof of Theorem~\ref{Theorem_Massey}. 
     

 \section{A global lifting result}

     The main result of this section is
     Theorem \ref{maintheorem_bis},
     a variation of the Scholz-Reichardt theorem. We start with a proposition
     useful  in proving this theorem in the split case, that is when  $d(U) > d(U/V)$.
     In the context of  the strong Massey property, it is useful for the {\it degenerate case}.

     \begin{prop}\label{proposition_localpropertes} Suppose that $\zeta_p\notin K$. Let $F/K$ be a $p$-extension  and let $S$ be a finite set of primes of~$K$. Let $k,m\geq 1$ and for $i=1,\cdots, k$     let $(\chi_{\p,i})_{\p\in S} \in H^1(\Gamma_\p)$. 
 Then for $i=1,\cdots, k$
\begin{enumerate}
 \item[$(i)$] there exist a  
  $\chi_i \in H^1(\Gamma_K)$  such that for every $\p \in S$, $ {\chi_i}_{|{\Gamma_\p}} =  \chi_{\p,i} $. Let $M_i/K$ be the $\Z/p$-extension fixed by $Ker(\chi_i)$;
 \item[$(ii)$] the extension $M_i/K$ is unramified outside $S\cup \{\q_{i}'\}$, where $\q_{i}'$ is a new tame prime such that $v_p(N(\q_{i}')-1) =m$; 
 \item[$(iii)$]  the extension $M_i/K$ is totally ramified at  $\q_{i}'$, 
 \item[$(iv)$]  for every $i$, $\q_{i}'$ splits completely in  $F/K$.
 \item[$(v)$]  for every $j\neq i$, $\q_{j}'$ splits completely in  $M_i/K$. \color{black}
\end{enumerate}

     \end{prop}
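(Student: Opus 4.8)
The plan is to construct the pairs $(\chi_i,\q_i')$ one at a time, by induction on $i=1,\dots,k$, carrying along the compositum $F_{i-1}:=F\cdot M_1\cdots M_{i-1}$ (with $F_0:=F$) and the enlarged set $S_i:=S\cup\{\q_1',\dots,\q_{i-1}'\}$. At step $i$ I seek $\chi_i\in H^1(\Gamma_K)$ whose localization equals $\chi_{\p,i}$ at each $\p\in S$, is \emph{trivial} at each previously produced prime $\q_j'$ with $j<i$, is ramified at one new tame prime $\q_i'$, and is unramified elsewhere, subject to $\q_i'$ splitting completely in $F_{i-1}$ and $v_p(N(\q_i')-1)=m$.

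First I would apply Proposition \ref{proposition_lifting} to the local data $(\chi_{\p,i})_{\p\in S}$ together with the trivial classes at $\q_1',\dots,\q_{i-1}'$, that is with $X=S_i$, taking the auxiliary $p$-extension there to be $F_{i-1}$ and prescribing $v_p(N(\q_i')-1)=m$. This produces infinitely many admissible new primes $\q_i'$ and a class $\chi_i^{(0)}\in H^1(\Gamma_{S_i\cup\{\q_i'\}})$ realizing the prescribed localizations on $S_i$, with $\q_i'$ split completely in $F_{i-1}\supseteq F$ and $v_p(N(\q_i')-1)=m$. Since $\chi_i^{(0)}$ is split at each $\q_j'$ ($j<i$), the extension $M_i$ is in fact unramified outside $S\cup\{\q_i'\}$. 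This already yields (i), (ii), (iv), and the clause ``$\q_j'$ splits completely in $M_i$ for $j<i$'' of (v); the complementary clause ``$\q_j'$ splits completely in $M_i$ for $j>i$'' is obtained automatically, since at each later step $t>i$ the prime $\q_t'$ is forced to split in $F_{t-1}\supseteq M_i$. Thus after all $k$ steps every case of (v) holds.

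The remaining point, and the heart of the matter, is to force \emph{total} ramification at $\q_i'$, i.e. (iii). Here I would invoke Gras's Theorem \ref{theo:Gras-Munnier}: if I additionally require $\q_i'$ to split completely in the governing field $\Gov_{K,S_i}=K'(\sqrt[p]{V^{S_i}})$, then by the remark following that theorem there is a $\Z/p$-extension ramified exactly at $\q_i'$ and split completely at $S_i$. Adding a suitable $\F_p^\times$-multiple of the corresponding class to $\chi_i^{(0)}$ leaves all localizations on $S_i$ unchanged while making the ramified component at $\q_i'$ nonzero; the result is the desired $\chi_i$, and $M_i$ is its fixed field.

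The hard part will be the simultaneity of the conditions imposed on $\q_i'$: realizing the $S$-data, splitting in $F_{i-1}$, splitting in $\Gov_{K,S_i}$, and prescribing $v_p(N(\q_i')-1)=m$. This is exactly where $\zeta_p\notin K$ enters. Applying Lemma \ref{disjoint} with $F=F_{i-1}$ and $X=S_i$, the field $F_{i-1}(\zeta_p)$ meets $K'(\sqrt[p]{V^{S_i}})$ only in $K'$, so the splitting condition in $F_{i-1}$ is independent of the conditions in the governing field, and Chebotarev's theorem applied to the compositum prescribes the needed Frobenius in each factor; the cyclotomic condition fixing $v_p(N(\q_i')-1)=m$ is incorporated as in Remark \ref{rema_1.7}. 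The one subtlety to watch is that the splitting condition in $\Gov_{K,S_i}$ (needed for the Gras class) can clash with realizing genuinely obstructed $S$-data, since killing such an obstruction requires $\q_i'$ to be nontrivial on the relevant part of $\Gov_{K,S_i}$; but in precisely that obstructed case the class $\chi_i^{(0)}$ of Proposition \ref{proposition_lifting} is already ramified at $\q_i'$ (otherwise it would realize the data with ramification confined to $S_i$), so the Gras adjustment is only needed in the unobstructed case, where no such clash occurs. Verifying this linear disjointness and the resulting compatibility is where the hypothesis $\zeta_p\notin K$ does its essential work.
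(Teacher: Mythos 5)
Your proposal is correct and follows essentially the same route as the paper: the same induction on $i$ with trivial local conditions imposed at the earlier $\q_j'$, Proposition~\ref{proposition_lifting} combined with the linear disjointness coming from $\zeta_p\notin K$ (Lemma~\ref{disjoint}) to control splitting in the growing compositum $FM_1\cdots M_{i-1}$, and Theorem~\ref{theo:Gras-Munnier} to force total ramification when the class produced by Proposition~\ref{proposition_lifting} happens to be unramified at the new prime. The only (harmless) divergence is in that last subcase: rather than your obstructed/unobstructed dichotomy at a single prime, the paper simply discards the unramified auxiliary prime and picks a second prime $\q_i'$ split in $\Gov_{K,S}F(\zeta_{p^{m}})$ but not in $K(\zeta_{p^{m+1}})$, applies Gras there, and takes a diagonal subfield of the resulting bicyclic extension.
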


     \begin{proof} 
     Given Proposition~\ref{proposition_lifting}, 
     $(i)$, $(ii)$ and $(iv)$ are perhaps not surprising and $(iv)$ involves a straightforward trick. Establishing $(v)$ is crucial for our results and this requires Gras' result, Theorem~\ref{theo:Gras-Munnier}.

\smallskip

$\bullet$ 
By Proposition \ref{proposition_lifting}, there exists a tame prime $\q'$ such that there exists a $\chi_1 \in H^1(\Gamma_{S\cup \{\q'\}})$ that  $\chi_1 \mid_{\Gamma_\p}=\chi_{\p,1}$ for each  $\p\in S$. This is $(i)$.
Moreover, since $\zeta_p\notin K$, 
using that  $\Gov_{K,S}/K'$ and $F(\zeta_{p^{m+1}})/K'$ \color{black} are linearly disjoint,
we can choose  $\q'$  such that $v_p(N(\q')-1)=m$  and    $\q'$ splits completely in $F/K$. 
Set $M$ to be the $\Z/p$-extension of $K$ fixed by~$\chi_1$. 

If $\chi_1$ is ramified at $\q'$, then we set $M_1:=M$ and $\q_1':=\q'$. 

If $\chi_1$ is unramified at $\q'$,  we choose a tame prime $\q_{1}'$ that  splits completely in   $\Gov_{K,S} F (\zeta_{p^{m}})/K$ but does not split completely in $K(\zeta_{p^{m+1}})/K$.  \color{black}By Theorem \ref{theo:Gras-Munnier} there exists a $\Z/p$-extension $M_1'/K$ exactly ramified at $\q_{1}'$ in which the places of $S$ split  completely. Then 
$Gal(M'_1M/K)\simeq \Z/p \times \Z/p$, we choose $M_1$ to be any intermediate extension other than than $M$ and $M'_1$. The field
$M'_1$ satisfies $(ii)$, $(iii)$, and $(iv)$, but we must exclude it get $(i)$.
 We have established $(i)-(iv)$.

\medskip

$\bullet$ Set $S_1=S\cup \{\q_1'\}$. 
Set $\chi_{\q_1'} \in H^1(\Gamma_{\q'_{1}})$ to be
trivial.  By Proposition \ref{proposition_lifting}, there is a tame prime $\q'$ such that there exists a $\chi \in H^1(\Gamma_{S_1\cup \{\q'\}})$ 
with $\chi |_{\Gamma_\q} =
\chi_{\p,2}$ for every $\p\in S_1$. 
This is $(i)$. 
Moreover, since $\zeta_p\notin K$, the prime $\q'$ can be chosen such that $v_p(N(\q')-1)=m$, and such that $\q'$ splits completely in $FM_1/K$ (indeed $\Gov_{K,S}/K'$ and $FM_1(\zeta_{p^{m+1}})/K'$ are linearly disjoint).
As before,  set $M$ to be the $\Z/p$-extension of $K$ corresponding to $\chi$. By the choice of $\chi_{\q_1'}$, observe that $\q'_{1}$ splits completely in $M/K$. \color{black}

If $\chi$ is ramified at $\q'$, set $M_2=M$ and $\q_2':=\q'$. 

If $\chi$ is unramified 
we proceed as did above to get $\q_{2}'$ and $M_2$.

Then, in all case, one has $(i)-(iv)$.

\smallskip

Note that the splitting choices on  $\q_1'$ and $\q_2'$ give $(v)$ as well.

\medskip

Repeat this process with $S_2=S_1 \cup \{ \q'_2\}$ to find $\q_{3}'$ and $M_3$ etc. to finish the proof.
\end{proof}

Theorem~\ref{maintheorem_bis} is the key result we need to establish the strong $n$-fold Massey property for $\Gamma_K$ and $\Gamma^{ta}_K$. The proof of Theorem~\ref{maintheorem_bis}
is more involved than the corresponding argument of \cite{HLMR} or the proof of Scholz-Reichardt for $U_{n+1}$. This is because in those situations one starts with a trivial homomorphism and inductively builds the entire group. The local plans are easy to introduce and maintain. In 
\S~\ref{subsection:proof}
we {\it start} with a homomorphism 
$\theta:\Gamma \to (\Z/p)^n $ and must lift that to $\bU_{n+1}$ and $U_{n+1}$, rather than build it ourselves.

     \begin{theo}\label{maintheorem_bis}
     Suppose that $\zeta_p\notin K$.
      Let $U$ be a $p$-group,  and $V\lhd U$ be a normal subgroup of $U$.
      Let $F/K$ satisfy  
      \begin{itemize}
          \item $F/K$ is unramified outside a set of primes $S=\{ \p_1,\cdots, \p_t\}$;
          \item for each $\p_i \in S$ the  decomposition group $\overline{D}_{\p_i}$ in $F/K$ respects a local plan $\rho_{\p_i}: \Gamma_{\p_i} \rightarrow U$. In other words, ${D}_{\p_i} \equiv \rho_{\p_i}(\Gamma_{\p_i})$ modulo $V$.
          \item $Gal(F/K)\simeq U/V$.
          
      \end{itemize} 
      
      Then there exists a Galois extension $L/K$ in $\overline{K}/K$ such that:
      \begin{enumerate} 
      \item[$(i)$] $L/K$ contains $F/K$;
       \item[$(ii)$] $Gal(L/K)\simeq U$;
       \item[$(iii)$] $\rho_{\p}(\Gamma_{\p})  \simeq  D_{\p}:= Gal(L_{\p}/K_{\p})$, for every $\p \in S$.
      \end{enumerate} 
       Moreover, if $F \subset K^{ta}$ then $L$ can be chosen in  $K^{ta}$.
     \end{theo}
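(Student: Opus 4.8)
The plan is to prove Theorem~\ref{maintheorem_bis} by induction on the order of the kernel $V$, reducing at each step to the one-dimensional case where the kernel is $\Z/p$. The base case $V=1$ is trivial, since then $F/K$ already realizes $U$. For the inductive step, I would choose a central subgroup $H\simeq \Z/p$ inside $V$ that is normal in $U$ (any minimal normal subgroup contained in $V$ works, and in a $p$-group such a subgroup is central), and factor the projection $U\twoheadrightarrow U/V$ through $U/H$. By the inductive hypothesis applied to the pair $(U/H, V/H)$, I obtain an extension $F'/K$ realizing $U/H$, containing $F$, unramified outside an enlarged set $S'$, and respecting the reductions modulo $H$ of the local plans $\rho_{\p}$. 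It then remains to solve a single embedding problem with kernel $H\simeq\Z/p$, lifting $\overline{\rho}':\Gamma\twoheadrightarrow U/H$ to $\rho:\Gamma\twoheadrightarrow U$.

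To solve that final one-step embedding problem I would use the local-global machinery of Section~\ref{section_embedding}. First enlarge $S'$ to include a set $N$ with $\Sha_N=1$, as provided by Lemma~\ref{lemm_kill_sha}; after this enlargement the obstruction to lifting is entirely local, by the discussion following that lemma. Theorem~\ref{theorem_embeddin} tells us the embedding problem is solvable precisely when $\mathrm{Inf}(\varepsilon)=0$, and the triviality of $\Sha_N$ means this global condition holds as soon as every local obstruction vanishes. At each prime $\p\in S$ the prescribed local plan $\rho_{\p}$ already provides a local lift to $U$, hence kills the local obstruction there; at the auxiliary primes of $N$ we may use the trivial or unramified local plans of Example~\ref{triv}, which exist because $\Gamma_{\p}$ is free or because the relevant cocycle can be arranged to split. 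For the genuinely new ramification I would invoke Proposition~\ref{proposition_localpropertes} to introduce fresh tame primes $\q_i'$ with prescribed splitting behavior, using the Scholz--Reichardt-type local plan of Example~\ref{local_plan_SR} to guarantee a local lift exists at each new prime while the linear disjointness granted by $\zeta_p\notin K$ (Lemma~\ref{disjoint}) lets us control the splitting in $F'$ and in the previously constructed extensions.

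Once every local obstruction is trivialized, Theorem~\ref{theorem_embeddin} yields a solution $\rho$, and because $d(U)=d(U/H)$ forces $H\subset U^p[U,U]$, any solution is automatically proper, i.e. surjective; this gives $(ii)$. Property $(i)$ follows since the construction builds $L$ above $F'$ above $F$. For $(iii)$ I would track the decomposition groups through the construction: at each $\p\in S$ the local plan $\rho_{\p}$ was chosen so that $\rho|_{\Gamma_\p}$ agrees with it up to equivalence, and since the solutions form a principal homogeneous space under $H^1(\Gamma)$ (again Theorem~\ref{theorem_embeddin}), I can adjust the global solution by a suitable global class, produced via Proposition~\ref{proposition_lifting}, to match the prescribed local behavior at all primes of $S$ simultaneously. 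The tame claim at the end follows because every newly introduced prime is tame and every local plan used is a tame local plan, so if $F\subset K^{ta}$ then $L\subset K^{ta}$ as well.

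The main obstacle I anticipate is the simultaneous matching of decomposition groups in $(iii)$ while keeping the global solution proper and respecting all previously fixed local data. The set of solutions is a torsor under $H^1(\Gamma)$, but the adjustment needed to fix the local behavior at $S$ must not disturb the splitting conditions already imposed at the auxiliary primes $N$ nor re-introduce ramification where none is allowed; reconciling these constraints is exactly what Proposition~\ref{proposition_localpropertes}, especially its part~$(v)$ on mutual splitting of the new primes, is designed to handle, and verifying that its hypotheses are met at each inductive stage is the delicate point. A secondary subtlety is ensuring that the choice of central $H\subset V$ preserves the $p$-rank equality $d(U/H)=d(U/V)$ needed for properness to propagate down the induction; this requires choosing $H$ inside the Frattini subgroup relative to $V$, which is possible precisely because $V\subset U^p[U,U]$ in the situations of interest.
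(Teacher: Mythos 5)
Your handling of the Frattini (nonsplit) part of the lifting is essentially the paper's argument: kill $\Sha_{N}$ once and for all via Lemma~\ref{lemm_kill_sha}, note that the prescribed local plans trivialize every local obstruction so Theorem~\ref{theorem_embeddin} applies, and then use the $H^1$-torsor structure on solutions together with Proposition~\ref{proposition_lifting} to push the solution back onto all local plans at the cost of one new tame prime carrying the S--R plan of Example~\ref{local_plan_SR}. That matches the paper's induction step for step, including the tameness claim.

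The gap is the split case. Your reduction to one-step embedding problems with central kernel $H\simeq\Z/p$ relies on $d(U)=d(U/H)$, equivalently $H\subset U^p[U,U]$, to make every solution of Theorem~\ref{theorem_embeddin} proper. When $H\not\subset U^p[U,U]$ the extension $1\to H\to U\to U/H\to 1$ splits, $\varepsilon=0$, the composition of $\overline{\rho}'$ with a section is a perfectly good but non-surjective lift, and nothing in your argument forces the image to be all of $U$; so $(ii)$ is not established. You dismiss this by asserting that $V\subset U^p[U,U]$ ``in the situations of interest,'' but the theorem is stated for an arbitrary normal $V\lhd U$, and the split case occurs in the paper's main application: in the degenerate Massey setting one takes $U=\varphi^{-1}(\theta(\Gamma))$ with $\theta(\Gamma)\subsetneq(\Z/p)^n$, and then $d(U)>d(U/V)$. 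The paper devotes the first half of its proof to exactly this point. It picks maximal subgroups $U_1,\cdots,U_k$ with $k=d(U)-d(U/V)$ so that $U/(V\cap U_1\cap\cdots\cap U_k)\simeq U/V\times(\Z/p)^k$, and then invokes Proposition~\ref{proposition_localpropertes} to realize the $k$ extra independent $\Z/p$-quotients globally, with local restrictions at $S$ prescribed by the characters $\eta_i$ dual to the new generators $x_i$ (so as to stay on the local plans), each new extension $M_i$ totally ramified at exactly one fresh tame prime $\q_i'$ with $v_p(N(\q_i')-1)=m$, and the $\q_i'$ split in $F$ and in the other $M_j$ (part $(v)$). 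Only after this step is the remaining kernel contained in the Frattini subgroup, at which point your central $\Z/p$-chain induction becomes valid. You do cite Proposition~\ref{proposition_localpropertes}, but only as a device for introducing auxiliary ramification to kill local obstructions, not as the mechanism that restores surjectivity; as written, your proof does not give $Gal(L/K)\simeq U$ in general.
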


\begin{proof} Let $p^m$ be the exponent of $U$.
Since $\zeta_p \notin K$, Lemma~\ref{disjoint} implies  $F(\zeta_p) \cap \Gov_{K,S}=K'$. \color{black} This will allow us to use Chebotarev's theorem to choose primes that split as we need in $K(\zeta_{p^{m+1}})$, $F$ and $\Gov_{K,S}$.
\smallskip

We first solve the problem when the group extension $1\to V \to U \to U/V \to 1$ is split.

\smallskip

$\bullet$ 
Let $d$ be the $p$-rank of $U$, and let $d_0$ be the $p$-rank of $U/V$. Set $k=d-d_0$.

Since we are in the split setting, $k>0$. 
 Hence $V \nsubset U^p[U,U]$: there exists a maximal subgroup $U_1$ of $U$ such that $V \nsubset U_1$. By maximality $U_1 \lhd U$ and we have
  $$U/(V\cap U_1) \hookrightarrow U/V \times U/U_1.$$ On the other hand  $$ 1 \longrightarrow  V/(V\cap U_1)  \longrightarrow U/(V\cap U_1) \longrightarrow U/V \longrightarrow 1$$ and $V/(V\cap U_1) \simeq VU_1/U_1 \hookrightarrow U/U_1 \simeq \Z/p$. Since $V \nsubset U_1$, we see 
$$|U/(V\cap U_1)|= |U/V| |U/U_1|,$$ and then $$g_1: U/(V\cap U_1) \stackrel{\simeq}{\longrightarrow} U/V \times U/U_1 \simeq U/V \times \Z/p \cdot $$

Set $A_1:=V\cap U_1$ so  $U/A_1$ has $p$-rank $d_0+1$. Set $k'=d-(d_0+1)$. If $k'>0$ then, as before,  there exists a maximal subgroup $U_2$ of $U$ such that $A_1 \nsubset U_2$, and then
$$g_2: U/(A_1\cap U_2) \stackrel{\simeq}{\longrightarrow} U/A_1 \times U/U_2 \simeq U/A_1 \times \Z/p \cdot $$

 We continue the process 
and set $A=V \cap U_1 \cap \cdots \cap U_k$ so
$$ \label{se2} g:  U/A \stackrel{\simeq}{\longrightarrow} U/V \times U/U_1 \times \cdots \times U/U_k \simeq U/V \times (\Z/p)^k\cdot$$ 

\medskip

For $i=1,\cdots, k$, let $x_i \in U$ such that  $U/U_i=\langle x_i U_i\rangle \simeq \Z/p$. 

\medskip

For $i=1,\cdots, k$, let $\eta_i \in H^1(U/A)$ be 
defined by $\eta_i(x_j)=\delta_{i,j}$, and $\eta_i$ is trivial on $U/V$.  The restriction
${\eta_i}_{|\rho_\p(\Gamma_\p)}$ can be viewed as an element  of $H^1(\Gamma_\q)$ and thus an input
of Proposition~\ref{proposition_localpropertes}.

By Proposition \ref{proposition_localpropertes}, 
there exist $\chi_i \in H^1(\Gamma_K)$ for $i=1,\cdots, k$ such that
\begin{enumerate}
 \item[$(i)$] for every  $\p \in S$, $ {\eta_i}_{|\rho_\p(\Gamma_p)} =  {\chi_i}_{|\Gamma_\p} $;
 \item[$(ii)$] for each $i$
 let $M_i$ the $\Z/p$-extension fixed by $Ker(\chi_i)$.
 The extension $M_i/K$ is unramified outside $S\cup \{\q_{i}'\}$ where $\q_{i}'$ is a new tame prime, such that $v_p(N(\q_{i}')-1)=m$.
 \item[$(iii)$]  the extension $M_i/K$ is totally ramified at  $\q_{i}'$,
 \item[$(iv)$]  for every $i$, $\q_{i}'$ splits completely in  $F/K$.
 \item[$(v)$]  for every $j\neq i$, $\q_{j}'$ splits completely in  $M_i/K$. 
\end{enumerate}

Put $K_2:=FM_1\cdots M_k$. 
By $(iii)$ and $(iv)-(v)$, one gets 
$$ h:  Gal(K_2/K) \stackrel{\simeq}{\longrightarrow}  Gal(F/K) \times (\Z/p)^k  \stackrel{\simeq}{\longrightarrow}  U/V \times (\Z/p)^k    \stackrel{\simeq}{{\longleftarrow}} U/A : g $$

\medskip

 Condition $(i)$ above implies
the two isomorphisms $g$ and $h$ respect the initial  local plans $\rho_\p$ for every $\p \in S$:  the image of
$\rho_\p(\Gamma_\p) \subset U$ projects to  to $D_\p(K_2/K)$ in $U/A \simeq Gal(K_2/K)$.
Moreover, for the other ramified primes $\q_i'$, one has  $v_p(N(\q_i') -1)=m$, and $D_{\q_i'}(K_2/K)=I_{\q_i'}(K_2/K) \simeq \Z/p$, $i=1,\cdots, k$. 

\smallskip

The extension $K_2/K$ is unramified outside  $S_2:=S\cup\{\q_1',\cdots, \q_k'\}$. Moreover, we have a local plan for each of these primes:  the one given by the hypothesis for those in~$S$, and  the S-R local plan for the $\q'_i$ (see Example \ref{local_plan_SR}).

\medskip

As $A$ is  contained in  the Frattini subgroup of $U$, the proof of the split case is  done.  Note that
all the $\q_{i}'$ are 
\medskip

We now proceed by induction 
when $1\to V \to U \to U/V \to 1$ does not split.

\smallskip
 
$\bullet$  Let $H_2 \subset U^p[U,U]$ be normal in $U$ and consider a sequence $H_n \subset H_2$, $n\geq  2$, of normal subgroups of~$U$, such that $H_n/H_{n+1} \simeq \Z/p$ and  $H_n=U$ for $n\gg 0$:  this is always possible since $U$ is a $p$-group.

Set $\Gamma=\ga$ or $\gta$.
 Consider  the embedding problem $(\E_n)$:
  
  $$\xymatrix{ & & & \Gamma \ar@{.>}[ld]_-{?\rho_{n+1}} \ar@{->>}[d]^{{\rho_n}} \\
1 \ar[r] &H_n/H_{n+1}  \ar[r] & U/H_{n+1} \ar@{->>}[r]_{g_n} &  U/H_n}$$
where  ${g_n}$ is the natural projection.

Let $N_2$ be a finite set of primes containing those ramified in $K_2/K$ ({\it i.e.} $S_2\subset N_2$) and such that  $\Sha_{N_2}=1$.

\medskip

At each level $U/H_n$ we want to:
\begin{enumerate}
 \item[$(i)$]  solve the {\it nonsplit} embedding problem $(\E_n)$;
 \item[$(ii)$]  adjust the solution by an element of $H^1$  (adding  ramification at a new prime) such that the new solution is on all local plans, including at the new prime of ramification. There is then no local obstruction for the next step of the induction.
\end{enumerate}

\smallskip

$-$ By construction 
and that $\sha_{N_2}=0$, there is no obstruction to lift the decomposition group $D_{\p}$ of $\p$ in $U/H_2$ to $U$: for each $\p$, one has a local plan. (If $\p \in N_2 \backslash S_2$ take the trivial plan (1) of
Example~\ref{triv}.)

One can apply \S \ref{section_Q}:
there exists a $\Z/p$-extension of $K_3/K_2$, unramified outside $N_2$, Galois over $K$, solving the lifting problem ($\E_2$) to $U/H_3$. That is $(i)$ of the strategy.

\smallskip

$-$
The problem now is that the decomposition group $D_\p$ at $\p \in N_2$ in $K_3/K$ may be off the local plan and therefore it may not be liftable to $U/H_4$. If we are on all local plans in $N_2$, we proceed as we did previously to lift to $U/H_4$.

Assume now that we are not on all local plans.
As $H^1(\Gamma_\p)$ acts as a principal homogeneous spaces on the solutions to our local embedding problem $(\E_\p)$, the existence of a local plan implies the existence of $f_\q \in H^1(\Gamma_\p)$ by which we can adjust our solution to be on the local plan.

\smallskip

The quotient $H_2/H_3$ is generated by the image of an element $y\in U$ of order $p^{m_0}$ with $m_0\leq m$.

We now use Proposition \ref{proposition_lifting}
to find a  tame place~$\q_2$  such that 
for $R_2=\{\q_2\}$
$$(f_\q)_{\q \in N_2}\in Im(\psi_{R_2}),\,\,\,   v_p(N(\q_2)-1)=m,$$ and $\q_2$ splits completely in $K_2/K$.
Hence there exists an element of 
$H^1(\Gamma_{N_2\cup R_2})$
that puts us on the local plan for all 
$\p \in N_2$.
As $\q_2$ splits completely in $K_2/K$, we are on the S-R local plan for
$\q_2$. Set $N_3= N_2 \cup \{\q_2\}$.
We are on the local plan at all $\q \in N_3$ and can proceed by induction.

As in the split case, 
all new primes of ramification are tame,
 so if $F\subset K^{ta}$, then $K_2 \subset K^{ta}$.
\end{proof}



\section{Applications} \label{section_massey}

\subsection{The main result} \label{subsection:proof}

In this section we  prove:

\begin{theo} \label{Theorem_Massey} 
Let $K$ be a number field and let $p$ be a prime number such that $\zeta_p \notin K$. Let $n\geq 3$. Then the profinite groups $\gta$ and $\ga$ satisfiy the strong $n$-fold Massey property (relative to~$p$).
\end{theo}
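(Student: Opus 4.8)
The plan is to reduce the statement $A_n=B_n=C_n$ to the global lifting result, Theorem~\ref{maintheorem_bis}. By the already-noted chain $A_n\subset B_n\subset C_n$, it suffices to prove $C_n\subset A_n$: that is, given $\chi_1,\dots,\chi_n\in H^1(\Gamma)$ with $\theta=(\chi_1,\dots,\chi_n):\Gamma\to(\Z/p)^n$ satisfying all the consecutive cup-product vanishing conditions $\chi_i\cup\chi_{i+1}=0$, I must produce a homomorphic lift $\rho:\Gamma\to U_{n+1}$ with $\varphi\circ\rho=\theta$, so that $\langle\chi_1,\dots,\chi_n\rangle$ vanishes. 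I would set $U=U_{n+1}$, $V=Z_{n+1}$, and build the lift by successively filling in the superdiagonals of the matrix, realizing $\theta$ as an unramified-outside-$S$ extension cut out by the $\chi_i$ and feeding it into the local-plan machinery.

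The key construction is the supply of \emph{local plans}. First I would note that the hypothesis $\theta\in C_n$ forces each $\chi_i\cup\chi_{i+1}=0$, which is exactly the vanishing condition needed, via the Min\'a\v c--T\^an local results alluded to just before Section~2 (the ``Massey local plans''), to guarantee that at each ramified prime $\p$ the local restriction $\theta_\p$ lifts to $U_{n+1}$. Concretely, for each prime $\p$ where $\theta$ ramifies I would exhibit a local homomorphism $\rho_\p:\Gamma_\p\to U_{n+1}$ compatible with $\theta_\p$; for unramified primes the trivial plan of Example~\ref{triv} works, and for the split or free local groups ($\zeta_p\notin K_\p$) any lift works. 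The consecutive cup-product hypotheses are precisely what make the obstruction to each local lift vanish, because the only cup products appearing in the cocycle obstruction to lifting along the chain $U_{n+1}/Z_{n+1}\to\cdots$ are the $\chi_i\cup\chi_{i+1}$.

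With local plans in hand at every prime of the ramification set $S$ of $\theta$, I would apply Theorem~\ref{maintheorem_bis} with $U=U_{n+1}$, $V=Z_{n+1}$, $U/V=\bU_{n+1}$, and $F/K$ the fixed field realizing a chosen homomorphic lift $\bar\rho:\Gamma\to\bU_{n+1}$ of $\theta$ (whose existence, i.e. $\theta\in B_n$, follows from $C_n=B_n$ in the relevant range or is constructed inductively by the same local-global method one level lower). The theorem then outputs a global $L/K$ with $\mathrm{Gal}(L/K)\simeq U_{n+1}$ whose decomposition groups match the prescribed local plans, giving the desired $\rho:\Gamma\to U_{n+1}$ with $\varphi\circ\rho=\theta$. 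Since the conclusion of Theorem~\ref{maintheorem_bis} keeps $L\subset K^{ta}$ when $F\subset K^{ta}$, the same argument handles both $\ga$ and $\gta$ simultaneously, using crucially that $\zeta_p\notin K$ to invoke the linear-disjointness (Lemma~\ref{disjoint}) underlying Proposition~\ref{proposition_localpropertes}.

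The main obstacle I anticipate is the inductive construction of the lift $\bar\rho$ to $\bU_{n+1}$ together with the verification that the consecutive cup-product conditions suffice, at every intermediate superdiagonal level, to kill the local obstruction rather than just the top-level one. In other words, the delicate point is matching the abstract local liftability of $\theta_\p$ (which the Massey local plans provide from $\chi_i\cup\chi_{i+1}=0$) with the hypotheses of Theorem~\ref{maintheorem_bis}, ensuring that at each stage of filling in entries $\rho_{i,j}$ the partial product is genuinely a homomorphism and the decomposition group respects the evolving plan. Handling the \emph{degenerate case} — where the span of the $\chi_i$ has dimension $<n$, so $d(U)>d(U/V)$ can occur and the relevant extension is split — requires the split-case argument of Theorem~\ref{maintheorem_bis} and the simultaneous-splitting control of Proposition~\ref{proposition_localpropertes}(v); I expect this bookkeeping, rather than any single conceptual step, to be where most of the care is needed.
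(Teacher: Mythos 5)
Your overall strategy --- reduce $C_n\subset A_n$ to Theorem~\ref{maintheorem_bis} by supplying local plans at the ramified primes via the local results of \cite{MT_JNT} and \cite{MT} --- is the paper's strategy, and your treatment of the local side (Demushkin or free $\Gamma_\q$, trivial plans at unramified primes, Proposition~\ref{proposition_localpropertes} for the degenerate/split case) matches the intended argument. The gap is in how you instantiate Theorem~\ref{maintheorem_bis}. You take $U=U_{n+1}$, $V=Z_{n+1}$, and $F/K$ the field cut out by a global lift $\bar\rho:\Gamma\to\bU_{n+1}$ of $\theta$; but the existence of such a $\bar\rho$ is exactly the statement $\theta\in B_n$, which is part of what is being proved. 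Neither of your suggested justifications closes this: ``$C_n=B_n$ in the relevant range'' is not available as an input (the introduction exhibits $B_4\subsetneq C_4$ for $K=\Q$, $p=2$; the equality for $\zeta_p\notin K$ is the theorem's conclusion), and ``the same local-global method one level lower'' is not one level lower, since $\ker(\bvarphi)$ is far from being $\Z/p$ --- constructing $\bar\rho$ is essentially the whole problem again. Moreover, even granting some $\bar\rho$, its decomposition groups would have to lie on local plans valued in $U_{n+1}$, which an arbitrary lift to $\bU_{n+1}$ need not satisfy.

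The fix, which is what the paper does, is to apply Theorem~\ref{maintheorem_bis} \emph{once}, with $F=K(\theta)$ (the field cut out by $\theta$ itself, so $Gal(F/K)\simeq\theta(\Gamma)$ exists for free), $U=\varphi^{-1}(\theta(\Gamma))$ and $U/V=\theta(\Gamma)$; the local plans are the full lifts $\rho_\q:\Gamma_\q\to U_{n+1}$ provided by \cite{MT_JNT} and \cite{MT}, whose images automatically land in $\varphi^{-1}(\theta(\Gamma))$. The internal induction of Theorem~\ref{maintheorem_bis} then traverses all intermediate quotients, and the local-plan adjustment mechanism keeps every stage unobstructed, so no intermediate global lift to $\bU_{n+1}$ ever needs to be produced separately. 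Relatedly, your claim that ``the only cup products appearing in the cocycle obstruction are the $\chi_i\cup\chi_{i+1}$'' is not how $C_n$ enters: at intermediate levels the obstructions are higher Massey products, not cup products, and the paper is explicit that $C_n$ is used only implicitly, through the theorem that Demushkin and free pro-$p$ groups satisfy the strong $n$-fold Massey property for $n\geq 3$.
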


Let $\chi_1, \cdots, \chi_n \in H^1 (\Gamma)$, and set $$\theta:=(\chi_1, \cdots, \chi_n) : \Gamma \rightarrow (\Z/p)^n.$$ 
 
Let $F:= K(\theta) \subset \overline{K}$ 
be the fixed field the kernel of $\theta$, 
that is intersection of the kernels  $Ker(\chi_i)$. Note $K(\theta) \subset K^{ta}$.
Set $G_0=Gal(F/K)$. Then  $G_0 $ is isomorphic to $(\Z/p)^r$. 

 Finally, we remark that our proof does not explicitly use the cup product condition $C_n$. This property is invoked implicitly when we use that the local Galois groups $\Gamma_\q$ satisfy the strong $n$-fold Massey property for $n \geq 3$.
 
\begin{proof} 
For every $\p \in S$, denote by $\theta_\p : \Gamma_\p \rightarrow (\Z/p)^n$ the restriction of $\theta$ to $\Gamma_\p$.

For $\q$ ramified in $\theta$, recall that $\Gamma_\p$ is either a Demushkin group or free pro-$p$. By \cite[Proposition 4.1]{MT_JNT} and \cite[Theorem 4.3]{MT}  Demushkin groups satisfy the strong $n$-fold Massey property
for $n \geq 3$.  
The lifts of $\theta_\q$ to homomorphisms $\rho_\q:\Gamma_\q \to U_{n+1}$
whose existence is guaranteed by \cite{MT_JNT} and \cite{MT}
necessarily have image in $\varphi^{-1}(\theta(\Gamma)) \subset U_{n+1}$.  
These are the Massey local plans referred to at the end of \S\ref{subsection_local}. 

$$\xymatrix{ 
& & & \varphi^{-1}(\theta(\Gamma)) \ar[r]\ar@{^{(}->}[r]\ar[d]\ar@{->>}[d]_\varphi 
& U_{n+1} \ar[d]\ar@{->>}[d]_\varphi\\
\Gamma_\q \ar@{->}@/^-1.50pc/[rrr]_{\theta_\q}\ar[rrru]^{\rho_\q}\ar[r] 
& \Gamma \ar[rru]_{?\rho} \ar[rr]\ar@{->>}[rr]_\theta & & \theta(\Gamma) \ar[r]\ar@{^{(}->}[r] &  (\Z/p)^n    }
$$
We simply apply Theorem~\ref{maintheorem_bis} with $U=\varphi^{-1}(\theta(\Gamma))$ and 
$U/V=\theta(\Gamma)$ to get the existence of $\rho$ which we then compose with the injection $\varphi^{-1}(\theta(\Gamma))
\hookrightarrow U_{n+1}$ to establish the result.
\end{proof}

 \begin{rema}   The method shows that each embedding problem can be solved by a tame prime~$\q$ that is given via the Chebotarev density theorem. One needs at most $n(n-1)/2$ such primes. Using GRH effective versions of Chebotarev's theorem, one can bound the absolute norms.  See Remark \ref{rema_1.7}.
 \end{rema}


\subsection{Abelian plans }

Let $\theta=(\chi_1, \cdots, \chi_n): \gta \rightarrow (\Z/p)^n$ be a homomorphism  in $C_n$:
  $$  \chi_1\cup \chi_2=\cdots = \chi_{n-1}\cup \chi_n=0.$$
The proof of Theorem~\ref{Theorem_Massey} above is {\it not} explicit for the ramified primes of $\theta$. The condition in $C_n$ is also used only in the local results we cite from \cite{MT_JNT} and \cite{MT}.
In this section we give another proof that is 
explicit for these primes when $p>n$ and highlights condition $C_n$.

Let~$S$ be the set of ramification of $\theta$, which  is by the choice of $\Gamma^{ta}_K$ tame. 
Then for  $\p \in S$ we have $\zeta_p\in K_\p$. 
 For $\psi \in H^1(\gta)$, set $\psi_\p:=\psi |_{\Gamma_\q}$.

 \begin{lemm} \label{lemma_notclose}  Suppose $\chi_{i,\p} \neq 0$.
 Then there exists $\lambda_{\q,i} \in \Z/p$ such that $\chi_{i+1,\q}=\lambda_{\q,i} \chi_{i,\q}$. 
 \end{lemm}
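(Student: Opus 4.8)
The plan is to work purely locally at the single fixed tame prime and to read off the conclusion from the non-degeneracy of the local cup product pairing. Under the convention $\psi_\p := \psi|_{\Gamma_\q}$ stated just above the lemma, the symbols $\chi_{i,\p}$ and $\chi_{i,\q}$ both denote the restriction $\chi_i|_{\Gamma_\q}$ to the local group at the fixed prime $\q\in S$; so the hypothesis $\chi_{i,\p}\neq 0$ is exactly $\chi_{i,\q}\neq 0$, and the entire statement lives in $H^1(\Gamma_\q)$. Since $\q$ is tame and $\zeta_p\in K_\q$, the group $\Gamma_\q$ is Demushkin on two generators, as recalled in \S\ref{subsection_local}; hence $H^1(\Gamma_\q)=\mathrm{Hom}(\Gamma_\q,\Z/p)$ is two-dimensional over $\F_p$ and $H^2(\Gamma_\q)\simeq \Z/p$. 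By the Poincar\'e duality property that defines Demushkin groups, the cup product $H^1(\Gamma_\q)\times H^1(\Gamma_\q)\to H^2(\Gamma_\q)\simeq \Z/p$ is a non-degenerate bilinear pairing (equivalently, this is local Tate duality combined with the isomorphism $\mu_p\simeq\Z/p$ afforded by $\zeta_p\in K_\q$).

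Next I would observe that this pairing is alternating. For $p$ odd, graded-commutativity of the cup product gives $\psi\cup\psi=-\psi\cup\psi$, hence $\psi\cup\psi=0$ for every $\psi\in H^1(\Gamma_\q)$; concretely this is the identity $(a,a)_\q=(a,-1)_\q=0$ for the Hilbert symbol, valid because $-1=(-1)^p$ is a $p$-th power. On the two-dimensional space $H^1(\Gamma_\q)$ this has a sharp consequence: for the nonzero class $\chi_{i,\q}$ the linear functional $\psi\mapsto \psi\cup\chi_{i,\q}$ is nonzero by non-degeneracy, so its kernel $\chi_{i,\q}^\perp$ is one-dimensional, while the alternating property places $\chi_{i,\q}$ itself inside that kernel. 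Comparing dimensions forces $\chi_{i,\q}^\perp=\langle\chi_{i,\q}\rangle$.

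Finally I would feed in the cup-product hypothesis. The relation $\chi_i\cup\chi_{i+1}=0$ coming from $C_n$ holds in $H^2(\gta)$; applying the localization map $H^2(\gta)\to H^2(\Gamma_\q)$, which commutes with cup products, yields $\chi_{i,\q}\cup\chi_{i+1,\q}=0$. Thus $\chi_{i+1,\q}\in\chi_{i,\q}^\perp=\langle\chi_{i,\q}\rangle$, i.e.\ $\chi_{i+1,\q}=\lambda_{\q,i}\chi_{i,\q}$ for a unique $\lambda_{\q,i}\in\Z/p$, as claimed. The one step to handle with care is the non-degeneracy of the local pairing, which I would not reprove but import from the Demushkin / Poincar\'e-duality structure of $\Gamma_\q$ already used in the paper; everything else—two-dimensionality of $H^1(\Gamma_\q)$, the alternating identity for $p$ odd, and the computation that a nonzero class spans its own orthogonal complement—is formal.
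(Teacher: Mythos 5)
Your proof is correct and follows essentially the same route as the paper: both arguments reduce to the fact that on the two-dimensional space $H^1(\Gamma_\q)$ the local cup-product pairing is perfect and alternating (for $p$ odd), so the nonzero class $\chi_{i,\q}$ spans its own annihilator, and the localized relation $\chi_{i,\q}\cup\chi_{i+1,\q}=0$ then forces $\chi_{i+1,\q}\in\langle\chi_{i,\q}\rangle$. The only cosmetic difference is that you obtain $\dim H^1(\Gamma_\q)=2$ from the Demushkin structure of $\Gamma_\q$, while the paper gets it from the local Euler--Poincar\'e characteristic together with $\zeta_p\in K_\q$; you also make explicit the alternating step that the paper leaves implicit.
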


 \begin{proof} 
 The arguments below are standard in local Galois cohomology. Using the local Euler-Poincar\'{e} characterstic and 
 that $\zeta_p \in K_\q$ one has
 $$\dim H^i(\Gamma_\q,\Z/p) = \dim H^i(\Gamma_\q,\mu_p)
 =\left\{ \begin{array}{ll}
1 & i=0 \\
2 & i=1\\
1 &i=2 \end{array}\right. .$$
As $\mu_p \simeq \Z/p$ in $K_\q$, the  perfect local pairing becomes
 $H^1(\Gamma_\q,\Z/p) \times H^1(\Gamma_\q,\Z/p) \to H^2(\Gamma_\q,\Z/p)$.
 That
  $\dim H^1(\Gamma_\q)=2$   gives that $\chi_{i,\q}$ is its own annihilator under the local pairing. The result follows from the condition
   $\chi_{i,\q} \cup \chi_{i+1,\q} =0$.
 \end{proof}

We need to lift 
$\theta:\gta \to (\Z/p)^n 
\twoheadleftarrow U_{n+1}$ to a 
homomorphism $\gta \to U_{n+1}$. We will do this in separate blocks of (local) nonzero characters. If $\chi_{j,\q}=\chi_{j+1,\q}=\cdots= \chi_{j+k,\q}=0$ we simply take $\rho_\q$ to be the trivial lift to $U_{n+1}$ on this block.

For a block with nonzero characters,
$\chi_{j,\q}, \chi_{j+1,\q}, \cdots, \chi_{j+k,\q}$, we have
\begin{eqnarray*}
\chi_{j+1,\q} & = &\lambda_{j,\q} \chi_{j,\q},\\
\chi_{j+2,\q} & = &\lambda_{j+1,\q} \chi_{j+1,\q},\\
 &\cdots \\
\chi_{j+k,\q} & = &\lambda_{j+k-1,\q} \chi_{j+k-1,\q}.
\end{eqnarray*}
On this block we set
$\rho_\q(\sigma_\q) $ and $\rho_\q(\tau_\q)$ to be  elements of $U_{n+1}$  that are nonzero only on the diagonal and on the `near-diagonal', that is at $(i,i)$ and $(i,i+1)$ entries.
E.g., starting with $\theta =(\chi_1,\chi_2,\chi_3)$ and
 $\chi_{2,\q} =\lambda_{1,\q} \chi_{1,\q}$ and $\chi_{3,\q} =\lambda_{2,\q} \chi_{2,\q}$
our local plan is, for $\gamma \in \{\sigma_\q,\tau_\q\}$:
$$\rho_\q(\gamma):=
\left(\begin{array}{cccc}
1 & \chi_{1,\q}(\gamma) & 0 & 0\\
0 & 1& \lambda_{1,\q} \chi_{1,\q}(\gamma) & 0 \\
0 & 0& 1 &\lambda_{1,\q}\lambda_{2,\q}\chi_{1,\q}(\gamma)\\
0 & 0 & 0 &1
\end{array}\right).
$$
We will show $\rho_\q(\sigma_\q)$ and $\rho_\q(\tau_\q)$  commute. From the definition of $\Lambda_\q$ below, we have 
 $\Lambda_\q^n=0$. Since $p>n$, $\rho_q$ gives
a representation of $\Gamma^{ab}_\q$, the abelianization of
$\Gamma_\q$ which is our local plan. 

We now show the commutation result. Set 
$$ 
\Lambda_\q=\left(\begin{array}{cccc}
0 & 1 & 0 & 0\\
0 & 0& \lambda_{1,\q}  & 0 \\
0 & 0& 0 &  \lambda_{1,\q}\lambda_{2,\q}\\
0 & 0 & 0 &0
\end{array}\right)$$
so 
\begin{eqnarray*} \rho_\q(\sigma_\q) \rho_\q(\tau_q) & =& \left(I+\chi_{1,\q}(\sigma_q)\Lambda_\q\right)
(I+\chi_{1,\q}(\tau_q)\Lambda_\q) \\
&  = &
I+(\chi_{1,\q}(\sigma_\q) +\chi_{1,\q}(\tau_\q)) \Lambda_\q + \chi_{1,\q}(\sigma_q)\chi_{1,\q}(\tau_\q) \Lambda^2_\q\\
&  = &
I+(\chi_{1,\q}(\tau_\q) +\chi_{1,\q}(\sigma_\q)) \Lambda_\q + \chi_{1,\q}(\tau_q)\chi_{1,\q}(\sigma_\q) 
\Lambda^2_\q \\
& =& \rho_\q(\tau_\q) \rho_\q(\sigma_\q).
\end{eqnarray*}

We have proved:
 
\begin{coro} Let $n\geq 3$, $p>n$ and $\zeta_p \notin K$. 
Then the strong Massey property  holds for~$\theta$ and $\gta$ with $\rho_\q$ as above constructed in blocks. The element
$\rho_\q(\tau_\q)$ has order $p$ in the lift $\rho:\gta \to U_{n+1}$ of $\theta$.
\end{coro}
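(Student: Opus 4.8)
The plan is to verify the Corollary by confirming that the block-wise local plans $\rho_\q$ constructed just above indeed assemble into a legitimate global lift of $\theta$ via Theorem~\ref{maintheorem_bis}, and then to read off the claimed order of $\rho_\q(\tau_\q)$. First I would take $U=\varphi^{-1}(\theta(\Gamma))\subset U_{n+1}$ and $V$ the center, exactly as in the proof of Theorem~\ref{Theorem_Massey}, and observe that the only thing that changed is the \emph{source} of the local plans: instead of the Massey local plans coming from \cite{MT_JNT} and \cite{MT}, we now supply, for each ramified tame prime $\q\in S$, the explicit abelian plan built in blocks. So the real content to check is that each such $\rho_\q$ is a genuine local plan, i.e.\ a bona fide homomorphism $\Gamma_\q\to U$ lifting $\theta_\q$.

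For this I would argue block by block. On a zero-block the plan is the trivial lift, which is visibly a homomorphism. On a nonzero-block, Lemma~\ref{lemma_notclose} (applied repeatedly, using the condition $C_n$ that $\chi_{i,\q}\cup\chi_{i+1,\q}=0$) furnishes the scalars $\lambda_{i,\q}$ expressing each local character as a multiple of the previous one, so that $\rho_\q(\sigma_\q)$ and $\rho_\q(\tau_\q)$ can both be written as $I+\chi_{1,\q}(\gamma)\Lambda_\q$ for the single nilpotent matrix $\Lambda_\q$. The displayed computation then shows $\rho_\q(\sigma_\q)$ and $\rho_\q(\tau_\q)$ commute, since the two terms differ only by harmless reorderings of scalar products. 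Because $\Lambda_\q^n=0$ and $p>n$, the map $\gamma\mapsto I+\chi_{1,\q}(\gamma)\Lambda_\q$ is multiplicative on the \emph{abelian} group $\Gamma_\q^{ab}$: indeed $\exp(a\Lambda_\q)\exp(b\Lambda_\q)=\exp((a+b)\Lambda_\q)$ holds once the nilpotency kills all terms of degree $\ge n$ and $p>n$ lets us invert the relevant factorials, so additivity of $\chi_{1,\q}$ translates into multiplicativity of $\rho_\q$. Hence $\rho_\q$ descends through $\Gamma_\q^{ab}$ and is a homomorphism with image in $U$, as required.

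With the local plans in hand, I would invoke Theorem~\ref{maintheorem_bis} verbatim to produce a global $\rho:\gta\to U\hookrightarrow U_{n+1}$ lifting $\theta$ and respecting every $\rho_\q$ at $\q\in S$; composing with $U\hookrightarrow U_{n+1}$ shows $\langle\chi_1,\dots,\chi_n\rangle$ vanishes, which is the strong Massey property for $\theta$. Finally, for the order statement: on each nonzero-block $\rho_\q(\tau_\q)=I+\chi_{1,\q}(\tau_\q)\Lambda_\q$, and since $\Lambda_\q$ is a nonzero nilpotent matrix with $\Lambda_\q^2$ generally nonzero, $\rho_\q(\tau_\q)$ is unipotent and nontrivial (at a prime ramified in $\theta$, $\chi_{1,\q}(\tau_\q)\neq0$), hence of order exactly $p$ in the $p$-group $U_{n+1}$. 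Because Theorem~\ref{maintheorem_bis}(iii) guarantees the global decomposition/inertia behaviour matches the local plan, this order is preserved in the global lift $\rho$.

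The main obstacle I anticipate is not the global assembly---Theorem~\ref{maintheorem_bis} is a black box here---but the verification that the block construction is genuinely a homomorphism, and in particular that the hypothesis $p>n$ is used correctly. The subtlety is that commutativity of the two matrices $\rho_\q(\sigma_\q)$ and $\rho_\q(\tau_\q)$ is necessary but not by itself sufficient; one must also confirm that $\rho_\q$ kills the single Demushkin relation $\sigma_\q\tau_\q\sigma_\q^{-1}=\tau_\q^{N(\q)}$, which is exactly what factoring through $\Gamma_\q^{ab}$ accomplishes, and that the image of $\Lambda_\q$ has nilpotency index at most $n<p$ so that the truncated exponential/logarithm correspondence is valid. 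Getting the bookkeeping of the $\lambda_{i,\q}$ right across block boundaries---so that the near-diagonal entries telescope into products $\lambda_{1,\q}\cdots\lambda_{i,\q}$---is the place where a sign or indexing slip would most easily creep in.
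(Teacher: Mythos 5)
Your overall architecture is the same as the paper's: build the local plan $\rho_\q$ in blocks using Lemma~\ref{lemma_notclose}, check commutativity of $\rho_\q(\sigma_\q)$ and $\rho_\q(\tau_\q)$, feed the plans into Theorem~\ref{maintheorem_bis}, and read off the order of $\rho_\q(\tau_\q)$. But the step where you justify that $\rho_\q$ is actually a homomorphism is wrong as written. You claim that $\gamma\mapsto I+\chi_{1,\q}(\gamma)\Lambda_\q$ is multiplicative because $\exp(a\Lambda_\q)\exp(b\Lambda_\q)=\exp((a+b)\Lambda_\q)$; this conflates $I+a\Lambda_\q$ with $\exp(a\Lambda_\q)$, which differ as soon as $\Lambda_\q^2\neq 0$. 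Indeed the paper's own displayed computation shows
$$(I+a\Lambda_\q)(I+b\Lambda_\q)=I+(a+b)\Lambda_\q+ab\,\Lambda_\q^2\neq I+(a+b)\Lambda_\q,$$
so the formula is \emph{not} multiplicative on $\Gamma_\q$, and no truncated-exponential identity rescues it. What is actually being done (and what you need to say) is that $\rho_\q$ is only \emph{prescribed} on the two topological generators $\sigma_\q,\tau_\q$ of the Demushkin group $\Gamma_\q=\langle \sigma_\q,\tau_\q\mid \sigma_\q\tau_\q\sigma_\q^{-1}=\tau_\q^{N(\q)}\rangle$; it extends to a homomorphism if and only if the images satisfy the single relation, which, given commutativity, reduces to $\rho_\q(\tau_\q)^{N(\q)-1}=1$.

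This is exactly where $p>n$ enters, and your proposal never closes that loop. For a unipotent $I+M$ with $M$ supported on the near-diagonal one has $M^{n+1}=0$, so in characteristic $p$ with $p\geq n+1$ we get $(I+M)^p=I+M^p=I$; hence $\rho_\q(\tau_\q)$ has order exactly $p$ when $M\neq 0$, and since $p\mid N(\q)-1$ (as $\zeta_p\in K_\q$ for a tame ramified prime) the relation $\rho_\q(\tau_\q)^{N(\q)-1}=1$ holds. Your alternative justification for the order statement --- ``unipotent and nontrivial, hence of order exactly $p$ in the $p$-group $U_{n+1}$'' --- is a non sequitur: nontrivial unipotent elements of $U_{n+1}(\F_p)$ can have order $p^2$ or more when $p\leq n$ (e.g.\ a regular unipotent element), which is precisely why the hypothesis $p>n$ is in the statement. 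With the order-$p$ computation inserted, both the homomorphism property and the final claim of the Corollary follow, and the rest of your argument (the global assembly via Theorem~\ref{maintheorem_bis} and the matching of decomposition groups) is fine.
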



\subsection{More liftings}

Set $r\geq 1$. 
Let $Gl_{n+1}(\Z/p^r)$ be the group of invertible  $(n + 1)\times (n + 1)$-matrices with entries in $\Z/p^r$ and
$U_{n+1}(\Z/p^r) \subset Gl_{n+1}(\Z/p^r)$ be the the subgroup of all upper-triangular unipotent matrices. Let 
$\pi_r : Gl_{n+1}(\Z/p^r)\rightarrow Gl_{n+1}(\Z/p) $  be the mod $p$ reduction homomorphism. It is well known that $Ker(\pi_r)$ is a $p$-group.
Let $U \subset Gl_{n+1}(\Z/p^r)$ be a $p$-group. The Scholz-Reichardt Theorem gives the existence of a Galois extension $K$ over $\Q$ such that $Gal(K/\Q) \simeq U$. In this case, if $p^m$ is the exponent of $U$,
we can guarantee  every ramified prime $\p$ satisfies   $N(\p) \equiv 1$ modulo $p^m$ and so all ramification is tame. 

 \smallskip

 On the other hand, by following the Massey product philosophy,  starting with  $\theta : \Gamma \rightarrow (\Z/p)^n$  in $C_n$, one can ask if $\theta$ lifts to 
 a  $\rho_r:\Gamma \to Gl_{n+1}(\Z/p^r)$ such that the diagram below commutes:

$$\xymatrix{ &  \pi_r^{-1} \left(U_{n+1}(\F_p)\right) \subset Gl_{n+1}(\Z/p^r) \ar@{->}[d]^{\pi_r} \\
&U_{n+1}(\F_p) \ar@{->}[d]^{\varphi}  \\
\Gamma    \ar@{->}[r]_{\theta} \ar@{->}[ru]_{\rho} \ar@{->}[ruu]^{? \rho_r} & (\Z/p)^n
 }
$$
Here  $\rho$ is a lift  given by Theorem \ref{Theorem_Massey}.  Since $Ker(\pi_r)$ is a $p$-group, we have $\rho_r(\Gamma)$ is also a $p$-group.
 
 \begin{theo} Take $\Gamma=\gta$ or $\ga$, and suppose $\zeta_p\notin K$. 
 For $n\geq 3$, let $\theta : \Gamma \rightarrow (\Z/p)^n$  satisfy  $C_n$.
 Let
 $\rho$ be  given by Theorem \ref{Theorem_Massey}, where we choose all 
 tame primes $\q'$ from that proof to satisfy $N(\p') \equiv 1$ modulo $p^{m(r)}$, where $p^{m(r)}$ is the exponent of $U_{n+1}(\Z/p^r)$. This is possible as $\zeta_p \notin K$.
  \\
 (i) Then for every $r\geq 1$, there exists a homomorphism $\rho_r : \Gamma \rightarrow Gl_{n+1}(\Z/p^r)$  such that 
 $\pi_r \circ \rho_r = \rho$
 and $ \theta=\varphi\circ  \pi_r\circ  \rho_r$.\\
 (ii) If moreover $\zeta_{p^{r}}\in K_\p$ for every ramified prime $\p$ in $\theta$
 then  $\rho_r$ can be taken such that $\rho_r(\Gamma) \subset U_{n+1}(\Z/p^r)$.
 \end{theo}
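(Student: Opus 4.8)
The plan is to read both parts as one more application of the global lifting machine of Theorem~\ref{maintheorem_bis}, now with the mod-$p$ unipotent group replaced by its $\Z/p^r$-avatar. Write $W:=\rho(\Gamma)\subset U_{n+1}(\F_p)$ and let $F$ be the subfield of $\overline K$ fixed by $\ker\rho$, so that $Gal(F/K)\cong W$ and $F/K$ is unramified outside $S\cup\{\q_1',\q_2',\dots\}$, the union of the ramification $S$ of $\theta$ with the tame Scholz--Reichardt primes $\q_i'$ produced in the proof of Theorem~\ref{Theorem_Massey}; as that proof keeps all ramification tame, $F\subset K^{ta}$. For part $(i)$ I set $U:=\pi_r^{-1}(W)\subset Gl_{n+1}(\Z/p^r)$ and $V:=\ker\pi_r$; for part $(ii)$ I take instead $U$ to be the preimage of $W$ inside $U_{n+1}(\Z/p^r)$, with $V$ the corresponding kernel of reduction. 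In both cases $V$ is a $p$-group and $W$ is a $p$-group, so $U$ is a $p$-group, and the projection $U\to U/V$ is exactly $\pi_r|_U$ followed by the identification $U/V\cong W$. Thus $\rho$ is precisely the input surjection $\Gamma\twoheadrightarrow Gal(F/K)\cong U/V$, and any lift $\rho_r$ supplied by Theorem~\ref{maintheorem_bis} will satisfy $\pi_r\circ\rho_r=\rho$ by construction.

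First I would record the local plans at the auxiliary primes $\q_i'$, which are the easy case. There $\rho$ sends a generator of inertia to a unipotent $y\in W$ and Frobenius to $1$; lifting $y$ to any unipotent $\tilde y\in U_{n+1}(\Z/p^r)$ (which lies in $U$ in both set-ups) and Frobenius to $1$ gives a homomorphism $\Gamma_{\q_i'}\to U$, since $\tilde y$ has order dividing the exponent $p^{m(r)}$ of $U_{n+1}(\Z/p^r)$ and the $\q_i'$ were chosen with $N(\q_i')\equiv 1\pmod{p^{m(r)}}$, so that $\tilde y^{\,N(\q_i')-1}=1$. This is the Scholz--Reichardt plan of Example~\ref{local_plan_SR} read inside $U$.

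The real work, and the main obstacle, lies at the ramified primes $\p\in S$ of $\theta$, where $\zeta_p\in K_\p$ and $\Gamma_\p$ is Demushkin on $\sigma_\p,\tau_\p$ with the single relation $\sigma_\p\tau_\p\sigma_\p^{-1}=\tau_\p^{N(\p)}$. I must lift the Massey local plan $\rho_\p$, say $\rho_\p(\sigma_\p)=S_0$ and $\rho_\p(\tau_\p)=T_0$ (unipotent), to a pair $(\tilde S,\tilde T)$ in $U$ reducing to $(S_0,T_0)$ and still satisfying $\tilde S\,\tilde T\,\tilde S^{-1}=\tilde T^{N(\p)}$. For part $(ii)$ the hypothesis $\zeta_{p^r}\in K_\p$ gives $N(\p)\equiv 1\pmod{p^r}$; taking $\tilde T$ to be the near-diagonal unipotent lift of $T_0$ that lifts the explicit block form of the Massey plan, the binomial identity $(I+c\Lambda)^{N(\p)-1}=I$ forces the Demushkin relation to survive, because $p^r\mid N(\p)-1$ together with the $p$-adic valuations of the $\binom{N(\p)-1}{j}$ kill all higher terms modulo $p^r$; the resulting lift lands in $U_{n+1}(\Z/p^r)$, as required. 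For part $(i)$ one knows only $N(\p)\equiv 1\pmod p$, so a unipotent lift need not respect the relation and I would instead use the extra room in $Gl_{n+1}(\Z/p^r)$: lifting $T_0$ to any unipotent $\tilde T$, the elements $\tilde T$ and $\tilde T^{N(\p)}$ are conjugate in $Gl_{n+1}(\Z/p^r)$ because $N(\p)$ is prime to $p$ and the $N(\p)$-power map preserves the conjugacy class of a unipotent (as one checks over $\F_p$ and then lifts), and since $S_0T_0S_0^{-1}=T_0^{N(\p)}$ already holds mod $p$, a conjugator $\tilde S$ reducing to $S_0$ is obtained by correcting an arbitrary conjugator by the centralizer of $\tilde T$. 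The surjectivity of the reduction $Z_{Gl}(\tilde T)\to Z_{Gl}(T_0)$ onto the needed coset is the delicate point I expect to be the crux of part $(i)$.

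With all local plans in hand I would invoke Theorem~\ref{maintheorem_bis} with the data $(U,V,F,\{\rho_\p\})$ to produce a Galois $L/K$ containing $F$ with $Gal(L/K)\cong U$, i.e.\ a homomorphism $\rho_r\colon\Gamma\to U\subset Gl_{n+1}(\Z/p^r)$. Since the quotient $U\to U/V$ is $\pi_r$ followed by the identification with $W$, we get $\pi_r\circ\rho_r=\rho$, whence $\varphi\circ\pi_r\circ\rho_r=\varphi\circ\rho=\theta$, giving $(i)$; in the part $(ii)$ set-up $U\subset U_{n+1}(\Z/p^r)$, so automatically $\rho_r(\Gamma)\subset U_{n+1}(\Z/p^r)$, giving $(ii)$. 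Finally, tameness of every prime of $S$ and of each $\q_i'$ ensures, via the last clause of Theorem~\ref{maintheorem_bis}, that $L\subset K^{ta}$ when $\Gamma=\gta$.
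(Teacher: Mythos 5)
Your global skeleton is the paper's: set $U=\pi_r^{-1}(\rho(\Gamma))$ (intersected with $U_{n+1}(\Z/p^r)$ for part (ii)), let $V$ be the kernel of reduction, give the auxiliary primes $\q_i'$ the Scholz--Reichardt plan using $N(\q_i')\equiv 1 \bmod p^{m(r)}$, and feed the data into Theorem~\ref{maintheorem_bis}. The genuine gap is at the primes of $S$, the ramified primes of $\theta$, which is exactly where the content of this theorem lives. The paper does not construct these local plans by hand: it quotes B\"ockle \cite[Theorem 1.3]{Bo} for the tame primes and Emerton--Gee \cite[Theorem 6.4.4]{EG} for the wild primes to lift $\rho_\p$ to $Gl_{n+1}(\Z_p)$ and then reduces mod $p^r$; for part (ii) it quotes Conti--Demarche--Florence \cite{CDF} to get lifts landing in $U_{n+1}(\Z/p^r)$. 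Your replacement argument fails on three counts. First, you only treat the tame Demushkin relation $\sigma_\p\tau_\p\sigma_\p^{-1}=\tau_\p^{N(\p)}$; for $\Gamma=\ga$ the set $S$ may contain wild primes $\p\mid p$ with $\zeta_p\in K_\p$, where $\Gamma_\p$ is Demushkin with the relation $x_1^{p^s}[x_1,x_2]\cdots[x_{n_\p+1},x_{n_\p+2}]$, and your construction says nothing about them (this is precisely what \cite{EG} is cited for). Second, even at tame primes your part (i) hinges on producing a conjugator $\tilde S$ of $\tilde T$ into $\tilde T^{N(\p)}$ that reduces to $S_0$, which you correctly identify as a surjectivity statement for centralizer reduction and then leave unproved; that statement is not elementary and is in substance B\"ockle's theorem, so the crux of part (i) is missing.

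Third, your part (ii) computation assumes the Massey local plan at $\p$ has the near-diagonal form $I+c\Lambda_\p$. That form belongs to the \emph{abelian plans} of \S 3.2, which the paper only uses under the extra hypothesis $p>n$; the general local plans coming from \cite{MT_JNT} and \cite{MT} need not be of this shape. Moreover, even for such a $\tilde T$, the identity $\tilde T^{\,N(\p)-1}=I$ requires $p^{m(r)}\mid N(\p)-1$, where $p^{m(r)}$ (the exponent of $U_{n+1}(\Z/p^r)$) equals $p^{\,r+\lfloor\log_p n\rfloor}$ and exceeds $p^r$ once $p\le n$, whereas the hypothesis $\zeta_{p^r}\in K_\p$ only gives $p^r\mid N(\p)-1$; the binomial coefficients $\binom{N(\p)-1}{j}$ with $p\mid j$ then need not vanish mod $p^r$. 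So your argument for (ii) is valid only in the special case $p>n$ with abelian-type plans, not in the generality of the statement. In short: same reduction to Theorem~\ref{maintheorem_bis}, but the local lifting inputs at $S$ --- which the paper imports from \cite{Bo}, \cite{EG} and \cite{CDF} --- are not supplied.
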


 \begin{proof} (i) Let $S$ be the set of ramified primes of $\theta$. 
By \cite[Proposition 4.1]{MT_JNT} and \cite[Theorem 4.3]{MT}, we may choose for each prime  $\p \in S$  
a lift $\rho_\p:\Gamma_\q \to U_{n+1}(\F_p)$. Using Theorem~\ref{Theorem_Massey} 
we realize a global lift $\rho: \Gamma \rightarrow U_{n+1}(\F_p)$ of $\theta$ 
whose restrictions to $\Gamma_\p$
 for all $\p \in S$ are $\rho_\p$.

\smallskip

We have to add many new ramified primes $\q'$ to obtain $\rho$. 
As $\zeta_p \notin K$, they can be chosen such that $N(\p') \equiv 1$ modulo $p^{m(r)}$, where $p^{m(r)}$ is the exponent of $U_{n+1}(\Z/p^r)$. 
We give each such  prime $\q'$ the [S-R] local plan, that is
\begin{itemize}
    \item[$-$] $\rho_{r,\q'}(\sigma_{\q'})=1$, and
    \item[$-$] $\rho_{r,\q'}(\tau_{\q'})$ is any lift  of  $\rho_{\q'}(\tau_{\q'})=\overline{x} \in U_{n+1}$, to $U_{n+1}(\Z/p^r)$. This element is killed by $p^{m(r)}$ and by local class field theory the image of $\tau$ in $\Gamma^{ab}_\q$ has order at least $p^{m(r)}$.
    
\end{itemize}

\smallskip

It remains to show the existence,
for $\q\in S$, of local plans $\rho_{r,\p} : \Gamma_\p \rightarrow  Gl_{n+1}(\Z/p^r)$ whose reductions modulo $p$ are
$\rho_\p$.

First,  there is the trivial local plan: when $\p|p$ and $\zeta_p\notin K_\p$. As $\Gamma_\p$ is free pro-$p$,  any lift of $\rho_\p(\Gamma_\p)$ in $U_{n+1}(\Z/p^{r})$ works.

For the other primes $\p \in S$  one needs more local lifting results. One uses the local plans given by:
\begin{itemize}
 \item[$-$] B\"ockle \cite[Theorem 1.3]{Bo} for the tame primes ($\p \nmid p$),
\item[$-$]  Emerton and Gee \cite[Theorem 6.4.4]{EG} for the wild primes ($\p |p$).
\end{itemize}

In \cite{Bo} and   \cite{EG}, the authors prove the existence of lifts $\rho_{\infty,\p}$ into $Gl_{n+1}(\Z_p)$ for every representation $\Gamma_\p \rightarrow Gl_{n+1}(\F_p)$. Applying these results to   $\rho_\q:\Gamma_\q \to U_{n+1}(\F_p)$, $\p \in S$
and reducing  modulo $p^r$ 
gives the desired local plans. Now (i)
follows by Theorem~\ref{maintheorem_bis}
with $U:=\pi_{r}^{-1}\left(\rho(\Gamma) \right)$ and $V=Ker(\pi_r) \cap U$.

For (ii) assume that $\zeta_{p^{r}}\in K_\p$ and since $\rho_\p(\Gamma_\p) \subset U_{n+1}(\F_p)$, a recent result of Conti, Demarche and Florence \cite{CDF} shows that there exist local lifts $\rho_{r,\p}$ of $\rho_\p$ in $Gl_{n+1}(\Z/p^r)$, that can be taken with image in 
$U_{n+1}(\Z/p^r)$, in the tame and wild setting.
Set $U:=\pi_{r}^{-1}\left(\rho(\Gamma) \right) \cap U_{n+1}(\Z/p^r)$ and $V=Ker(\pi_r) \cap U$. Since $\rho(\Gamma) \simeq U/ V$, and $\rho(\Gamma)$ and  $V$ are $p$-groups, we see $U$ is also a $p$-group. 
We apply 
Theorem \ref{maintheorem_bis} with $U$, $U/V$, and the above local plans $\rho_{r,\p}$.
 \end{proof}

 \begin{rema}Our construction does not allow us to pass to the projective limit to get a lift in $Gl_{n+1}(\Z_p)$. This is because $m(\infty)=\infty$ and we would need to choose primes $\q'$ with $N(\q') \equiv 1$ modulo $p^\infty$.
 
 \end{rema}

 \begin{rema} Observe that in the nondegenerate case, the group $\rho_r(\Gamma)$ contains $U_{n+1}(\Z/p^r)$. \end{rema}
 
\smallskip

 To conclude let us show why the condition "$\zeta_{p^{r}} \in K_\p$" given in \cite{CDF} is in a certain sense necessary.

 \begin{prop} Let $\p$ be a tame prime. Suppose given a   homomorphism $\rho_\p : \Gamma_\p \rightarrow U_{n+1}(\F_p)$, and a lift $\rho_{\p,r}$ of $\rho_\p$ in $U_{n+1}(\Z/p^r)$:
$$\xymatrix{ 
&U_{n+1}(\Z/p^r) \ar@{->}[d]^{\pi_r}  \\
\Gamma_\p    \ar@{->}[r]_{\rho_\p} \ar@{->}[ru]^{\rho_{\p,r}} & U_{n+1} \\ 
 }
$$
If  a character $\theta_i$ on the near diagonal of $U_{n+1}$ is ramified, then $\zeta_{p^{r}} \in K_\p$. That is, if $\theta_i(\tau_\p) \neq 0$,  then $N(\p) \equiv 1$ modulo~$p^r$.
 \end{prop}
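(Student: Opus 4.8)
The plan is to exploit the single defining relation of $\Gamma_\p$ as a Demushkin group and to read it off on the near-diagonal coordinates. First I would note that the hypothesis $\theta_i(\tau_\p)\neq 0$ already forces the Demushkin case: if $\zeta_p\notin K_\p$ then $\Gamma_\p\simeq\Z_p$ carries no pro-$p$ inertia, so $\tau_\p$ is trivial and every near-diagonal character kills it. Hence we may assume $\zeta_p\in K_\p$, so that $\Gamma_\p$ is Demushkin with generators $\sigma_\p,\tau_\p$ subject to the unique relation $\sigma_\p\tau_\p\sigma_\p^{-1}=\tau_\p^{N(\p)}$ recalled in \S\ref{subsection_local}.

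Next I would apply $\rho_{\p,r}$ to this relation. Writing $S=\rho_{\p,r}(\sigma_\p)$ and $T=\rho_{\p,r}(\tau_\p)$ in $U_{n+1}(\Z/p^r)$, the homomorphism property gives $STS^{-1}=T^{N(\p)}$. The key structural fact is that each near-diagonal coordinate $(i,i+1)$ is a group homomorphism $U_{n+1}(\Z/p^r)\to\Z/p^r$: in a product of unipotent upper-triangular matrices the $(i,i+1)$ entry is simply the sum of the two $(i,i+1)$ entries. Consequently this coordinate factors through the abelianization and is therefore invariant under conjugation; reading the relation on the $(i,i+1)$ entry yields
$$T_{i,i+1}=(STS^{-1})_{i,i+1}=(T^{N(\p)})_{i,i+1}=N(\p)\,T_{i,i+1}\quad\text{in }\Z/p^r,$$
that is $(N(\p)-1)\,T_{i,i+1}\equiv 0 \pmod{p^r}$.

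Finally I would invoke the ramification hypothesis. The entry $T_{i,i+1}\in\Z/p^r$ reduces modulo $p$ to $\theta_i(\tau_\p)\in\F_p$, which is nonzero by assumption; hence $T_{i,i+1}$ is a unit in $\Z/p^r$. Cancelling it gives $N(\p)\equiv 1\pmod{p^r}$. Since $\p\nmid p$ the extension $K_\p(\zeta_{p^r})/K_\p$ is unramified, so $\zeta_{p^r}\in K_\p$ exactly when $\zeta_{p^r}$ lies in the residue field $\F_{N(\p)}$, i.e. when $p^r\mid N(\p)-1$; this is precisely the congruence just obtained, completing the argument.

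I expect the only subtle point to be the conjugation-invariance of the near-diagonal coordinate; everything else is a direct reading of the Demushkin relation. The cleanest justification is the abelianization argument above, but one can instead compute $(STS^{-1})_{i,i+1}=(S(T-I)S^{-1})_{i,i+1}$ directly and observe that the index constraints $i\le k<l\le i+1$ force $k=i$, $l=i+1$, so that only the $(i,i+1)$ entry of $T-I$ contributes.
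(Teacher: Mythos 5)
Your proof is correct, and it reaches the conclusion by a genuinely different (and more computational) route than the paper. The paper's argument observes that the near-diagonal coordinate $\theta_{i,r}:=(\rho_{\p,r})_{i,i+1}$ is a homomorphism $\Gamma_\p\to\Z/p^r$ lifting $\theta_i$, that the value $\theta_{i,r}(\tau_\p)$ is a unit, and hence that the associated cyclic degree-$p^r$ extension of $K_\p$ is totally (tamely) ramified; local class field theory then forces $\zeta_{p^r}\in K_\p$. You instead feed the Demushkin relation $\sigma_\p\tau_\p\sigma_\p^{-1}=\tau_\p^{N(\p)}$ (recalled in the paper's discussion of local plans) into $\rho_{\p,r}$ and read it off on the $(i,i+1)$ entry, using the same key fact that this coordinate is a homomorphism --- hence conjugation-invariant --- to get $(N(\p)-1)T_{i,i+1}\equiv 0 \pmod{p^r}$ with $T_{i,i+1}$ a unit. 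Your reduction to the case $\zeta_p\in K_\p$ (since otherwise $\Gamma_\p\simeq\Z_p$ has trivial pro-$p$ inertia and the hypothesis is vacuous) is a point the paper leaves implicit, and your closing observation that $\zeta_{p^r}\in K_\p$ is equivalent to $p^r\mid N(\p)-1$ for $\p\nmid p$ correctly identifies the two formulations in the statement. What your approach buys is self-containedness: it uses only the explicit presentation of $\Gamma_\p$ already stated in the paper, avoiding the appeal to the classification of totally tamely ramified cyclic extensions; what the paper's approach buys is independence from the precise form of the Demushkin relation, resting instead on the standard structure of the local abelianized Galois group.
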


 \begin{proof} By hypothesis there exists $\theta_i \in H^1(\Gamma_\p,\Z/p)$ such that $\theta_i(\tau_\p) \neq 0$. But this homomorphism  lifts to $\theta_{i,r} \in H^1(\Gamma_\p,\Z/p^r)$.  The two corresponding extensions are {\it cyclic extensions}, included in each other, and since $\theta_i$ is ramified (at $\p$), it forces the cyclic degree $p^r$ extension associated to $\theta_{i,r}$ to be totally ramified,  which implies $\zeta_{p^r} \in K_\p$ by class field theory.
 \end{proof}


\subsection{Finite ramification sets}
Let $S$ be a finite set of primes of $K$ and set $K_S$  to be the maximal pro-$p$ extension of~$K$ unramified outside $S$. When $p=2$, we assume that the real archimedean places remain real in every subfield of $K_S$. Set $\Gamma_S:=Gal(K_S/K)$.  Shafarevich and Koch showed  the pro-$p$ group $\Gamma_S$ is finitely generated. 

\subsubsection{Free and Demushkin groups} Let $S_p$ be the set of $p$-adic primes of $K$. The pro-$p$ group $\Gamma_{S_p}$ can be free.
But, as observed first by Shafarevich,   $\Gamma_{S_p}$ can be a free noncommutative pro-$p$ group, for instance  when $p$ is regular, and $K=\Q(\zeta_p)$: in this case $\Gamma_{S_p}$ is free on $(p+1)/2$ generators. When  $\Gamma_{S_p}$ is free, it  obviously satisfies the strong $n$-fold Massey property for every $n\geq 2$.
We state a Conjecture of Gras \cite[Conjecture 7.11]{Gras-CJM}: 
\begin{conjectureNC}[Gras]
Fix a number field  $K$. For $p\gg 0$ the pro-$p$ group $\Gamma_{S_p}$ is free on $r_2+1$ generators, where $2r_2$ is the number of complex embeddings of $K$.
\end{conjectureNC}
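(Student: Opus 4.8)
The plan is to reduce the conjecture to one cohomological vanishing and then isolate the arithmetic input that actually carries the difficulty. First I would fix $K$ and note that $\zeta_p\in K$ for only finitely many $p$, so for $p\gg 0$ we have $\zeta_p\notin K$ and the Koch--Shafarevich formalism of \S\ref{section_Q} applies with $X=S_p$. Write $d=\dim_{\F_p}H^1(\Gamma_{S_p},\F_p)$ and $r=\dim_{\F_p}H^2(\Gamma_{S_p},\F_p)$ for the generator and relation ranks of the finitely generated pro-$p$ group $\Gamma_{S_p}$; it is free exactly when $r=0$. The global Euler--Poincar\'e characteristic for $p$ odd gives the unconditional identity $d-r=r_2+1$ (one checks it directly for $K=\Q$, where $\Gamma_{S_p}\simeq\Z_p$). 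Hence \emph{freeness on exactly $r_2+1$ generators is equivalent to $r=0$}, and the entire statement collapses to showing $H^2(\Gamma_{S_p},\F_p)=0$ for all $p\gg 0$, i.e.\ that $K$ is $p$-rational for almost all $p$.

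Next I would compute $r=\dim H^2(\Gamma_{S_p},\F_p)$ by combining Poitou--Tate duality with the machinery of \S\ref{section_Q}. The nine-term Poitou--Tate sequence expresses $H^2(\Gamma_{S_p},\F_p)$ through the Shafarevich group $\Sha_{S_p}$ (the kernel of localization, as in Lemma~\ref{lemm_kill_sha}) and the local terms $H^2(\Gamma_v,\F_p)$ for $v\in S_p$, the latter being dual to $\mu_p(K_v)$. By the Koch--Shafarevich bound recalled in \S\ref{section_Q} one has $\Sha_{S_p}\hookrightarrow\CyB_{S_p}=(V_{S_p}/(K^\times)^p)^\wedge$, so controlling $\Sha_{S_p}$ means bounding the $\F_p$-rank of $V_{S_p}/(K^\times)^p$. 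The outcome of this bookkeeping is the standard criterion for $p$-rationality: $r=0$ holds precisely when the $p$-class group is trivial and the natural map from the global units to the semilocal units at $p$ is surjective after tensoring with $\F_p$ --- equivalently, when the torsion submodule $\mathcal T_K$ of the Galois group of the maximal abelian pro-$p$ extension of $K$ unramified outside $p$ vanishes.

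The condition $\mathcal T_K=0$ then separates into two pieces of very different character. The first is $p\nmid h_K$; since only finitely many primes divide the class number, this holds automatically for $p\gg 0$, and the local unit indices and the $p$-part of $\mu$ are likewise controlled for large $p$. What remains is a single transcendental-looking quantity: the $p$-adic regulator $R_p(K)$, the determinant formed from the $p$-adic logarithms of a fundamental system of units paired against the embeddings $K\hookrightarrow K_v$, $v\mid p$. The remaining content of $\mathcal T_K=0$ is exactly that $R_p(K)$ be a $p$-adic unit.

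The hard part --- indeed essentially the whole conjecture --- is to prove $v_p(R_p(K))=0$ for all sufficiently large $p$. This is a uniform-in-$p$ strengthening of the (already open) Leopoldt conjecture: Leopoldt asks only that $R_p(K)\neq 0$ for each fixed $p$, whereas here one must force the $p$-adic valuation of a $p$-adically varying determinant to vanish for all but finitely many $p$ at once. I know of no technique that controls the $p$-divisibility of $p$-adic regulators as $p$ varies; Cohen--Lenstra--Martinet heuristics predict that a ``random'' regulator is a $p$-adic unit and so support the conjecture, but they provide no proof. I therefore expect this final step to be the genuine obstacle: the governing-field and Chebotarev techniques of this paper, together with $\zeta_p\notin K$ for $p\gg 0$, reduce the conjecture cleanly to the regulator, but do not resolve it, and it is precisely here that the problem remains open.
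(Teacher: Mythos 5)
The statement you are asked to prove is labelled as a \emph{conjecture} in the paper (Gras, \cite[Conjecture 7.11]{Gras-CJM}); the paper offers no proof of it and only invokes it as a hypothesis in the final corollary. So there is no argument of the authors to compare yours against, and your proposal should be judged on whether it actually closes the statement. It does not, and you say so yourself: what you give is a (correct) chain of equivalences, not a proof. For the record, the reductions you carry out are sound and are essentially Gras's own route to the conjecture: for $p\gg 0$ one has $\zeta_p\notin K$ and $p\nmid h_K$; the global Euler--Poincar\'e formula gives $d-r=r_2+1$ unconditionally for odd $p$ and $S=S_p$, so freeness on $r_2+1$ generators is equivalent to $H^2(\Gamma_{S_p},\F_p)=0$, i.e.\ to $p$-rationality of $K$; and via Poitou--Tate and the Koch--Shafarevich description of $\Sha_{S_p}$ this collapses, for large $p$, to the normalized $p$-adic regulator being a $p$-adic unit (together with the Leopoldt non-vanishing, which your formulation slightly glosses over by folding it into $\mathcal T_K=0$).

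The genuine gap is exactly the step you flag in your last paragraph: there is no known technique for controlling $v_p(R_p(K))$ uniformly as $p$ varies, and this is not a technical lacuna but the entire content of the conjecture --- indeed it subsumes a uniform form of Leopoldt. The governing-field and Chebotarev machinery of this paper is designed to choose auxiliary tame primes with prescribed splitting, and has no purchase on the $p$-divisibility of a fixed determinant of $p$-adic logarithms; so no argument along the lines of \S\ref{section_Q} or Proposition~\ref{proposition_lifting} can supply the missing step. Your write-up is therefore best read as a correct \emph{reformulation} of the conjecture plus an accurate diagnosis of why it is open, not as a proof; it should not be presented as one.
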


There is another context for which $\Gamma_{S_p}$ satisfies the strong $n$-fold Massey property for every $n\geq 3$: when $\Gamma_{S_p}$ is Demushkin. This situation has been studied in \cite{Win}, Section~3.

\subsubsection{Deep relations}

When $S\cap S_p=\emptyset$, the pro-$p$ group $\Gamma_S$ is FAB: every open subgroup has finite abelianization. 

Observe first that $\Gamma_S$ can be trivial. Indeed, take $K=\Q$, and $S=\emptyset$. 
It can also be cyclic of order $p^m$.  
Indeed, take $K=\Q$, $p$ odd, and $\ell$ a prime such that $v_p(\ell-1)=m$. Set $S=\{\ell\}$; then $\Gamma_S \simeq \Z/p^m$. 
In this situation,  it is not difficult to see that $\Gamma_S$ satisfies the strong $n$-fold Massey property if and only if $n +1 \leq p^m$. \
In the cyclic setting, $\Gamma_S$ is presented by one generator $x$ and one relation $r:=x^{p^m}$  of depth $p^m$ (using the Zassenhaus filtration).

There is the following general result of Vogel \cite[Corollary 1.2.9]{Vogel_phd}:

\begin{theo} \label{theo_vogel}  Let $G$ be a finitely generated pro-$p$ group described by generators and a set $R$  of relations. If all elements of $R$ are of at least depth  $n+1$, then $G$ satisfies the strong $k$-fold Massey property for $2\leq k \leq n$.\end{theo}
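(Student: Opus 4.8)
The plan is to prove a slightly stronger, \emph{unconditional} statement, from which the strong $k$-fold Massey property is immediate. Write $G=F/N$, where $F$ is the free pro-$p$ group on the given generators and $N$ is the closed normal subgroup generated by $R$. Fix $k$ with $2\le k\le n$ and let $U_{k+1}$ be the upper-triangular unipotent $(k+1)\times(k+1)$ matrices over $\F_p$, with $\varphi\colon U_{k+1}\twoheadrightarrow(\Z/p)^k$ the near-diagonal map. I will show that \emph{every} homomorphism $\theta=(\chi_1,\dots,\chi_k)\colon G\to(\Z/p)^k$ lifts to a homomorphism $\rho\colon G\to U_{k+1}$ with $\varphi\circ\rho=\theta$. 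In particular $\langle\chi_1,\dots,\chi_k\rangle$ always vanishes, so the strong $k$-fold Massey property holds (the cup-product hypotheses are not even needed for this argument).

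First I would inflate $\theta$ along $F\twoheadrightarrow G$ to $\overline{\theta}\colon F\to(\Z/p)^k$ and lift it \emph{on the free group}. Filter $U_{k+1}$ by the subgroups $U^{(m)}$ of matrices congruent to the identity modulo $\mathfrak n^m$, where $\mathfrak n$ denotes the strictly upper-triangular (nilpotent) matrices; then $U^{(1)}/U^{(2)}=(\Z/p)^k$ via $\varphi$, each successive quotient $U^{(m)}/U^{(m+1)}$ is an $\F_p$-vector space lying centrally in $U_{k+1}/U^{(m+1)}$, and $\mathfrak n^{k+1}=0$ gives $U^{(k+1)}=1$. Because $F$ is free pro-$p$ we have $H^2(F,\Z/p)=0$; hence at every stage of the tower $U_{k+1}/U^{(2)}\leftarrow\cdots\leftarrow U_{k+1}$ the obstruction to extending the lift, which lies in a direct sum of copies of $H^2(F,\Z/p)$ with trivial coefficients, vanishes. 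Thus $\overline{\theta}$ lifts to some $\tilde\rho\colon F\to U_{k+1}$ with $\varphi\circ\tilde\rho=\overline{\theta}$.

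The heart of the proof is a filtration estimate showing that $\tilde\rho$ kills the relations. Extend $\tilde\rho$ to the continuous $\F_p$-algebra homomorphism $\F_p[[F]]\to M_{k+1}(\F_p)$. Since each $\tilde\rho(g)$ is unipotent, $\tilde\rho(g)-1$ is strictly upper-triangular, so the augmentation ideal $I$ is carried into $\mathfrak n$ and therefore $I^m$ into $\mathfrak n^m$. As $\mathfrak n^{k+1}=0$, we get $\tilde\rho(I^{k+1})=0$. Using the standard description of the Zassenhaus filtration $F_m=\{g\in F:\,g-1\in I^m\}$ in the completed group algebra, this yields $\tilde\rho(F_{k+1})=1$. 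Now $k\le n$ forces $F_{n+1}\subseteq F_{k+1}$, and by hypothesis every $r\in R$ has depth at least $n+1$, i.e.\ $r\in F_{n+1}$; hence $\tilde\rho(r)=1$ for all $r\in R$, so $\tilde\rho$ is trivial on $N$. Consequently $\tilde\rho$ factors as $\tilde\rho=\rho\circ(F\twoheadrightarrow G)$ for a homomorphism $\rho\colon G\to U_{k+1}$, and $\varphi\circ\rho=\theta$ because the near-diagonal coordinates of $\tilde\rho$ recover $\overline{\theta}$. This is exactly the vanishing of $\langle\chi_1,\dots,\chi_k\rangle$.

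The step I expect to be the main obstacle is making the filtration compatibility fully precise: one must fix the indexing convention so that ``depth at least $n+1$'' means $R\subseteq F_{n+1}$ (consistent with the stated depth $p^m$ of $x^{p^m}$ in the cyclic example), and then carefully verify the identity $F_m=\{g:\,g-1\in I^m\}$ together with $\mathfrak n^{k+1}=0$; once these are aligned, killing the relations is automatic. A secondary point to confirm is that the obstructions along the central tower genuinely lie in $H^2(F,\Z/p)$ with trivial action, which holds because the graded pieces $U^{(m)}/U^{(m+1)}$ are acted on trivially through the abelian quotient $(\Z/p)^k$. As a sanity check, applied to $G\simeq\Z/p^m$ (one relation of depth $p^m=n+1$) the argument recovers that the strong $k$-fold Massey property holds precisely for $k\le p^m-1$, matching the exponent bound for $U_{k+1}$.
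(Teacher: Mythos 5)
Your proof is correct, but it follows a genuinely different route from the paper. The paper does not prove this theorem internally at all: it quotes it as Corollary 1.2.9 of Vogel's thesis, and in the Remark that follows, the relevant implication (relations of depth $\geq n+1$ imply vanishing of all $k$-fold Massey products for $k\leq n$) is deduced from Vogel's Theorem A3, i.e.\ from the trace formulas $\epsilon_{I,p}(f)=\pm\,{\rm tr}_f\langle \chi_{i_1},\dots,\chi_{i_k}\rangle$ identifying Massey product values with coefficients of the Magnus expansion of the relators. You instead work entirely on the representation side, which is the paper's own working Definition of vanishing (Dwyer's criterion): inflate $\theta$ to the free group $F$, lift it to $U_{k+1}$ using $cd(F)\leq 1$ (equivalently, projectivity of free pro-$p$ groups) along the central filtration of $U_{k+1}$, and then kill the relations by the filtration argument --- the extension of the lift to $\F_p[[F]]$ sends the augmentation ideal $I$ into the nilpotent ideal, hence kills $I^{k+1}$, and the Jennings--Lazard identity $F_m=F\cap(1+I^m)$ for the Zassenhaus filtration gives $\tilde\rho(F_{k+1})=1\supseteq \tilde\rho(F_{n+1})$, so the closed normal subgroup generated by $R$ dies and $\tilde\rho$ factors through $G$. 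That identity is the crux you flag, and it is a standard theorem, so there is no gap; note also that your unconditional conclusion (every $\theta$ lifts, with no cup-product hypotheses) is exactly assertion (ii) of the paper's Remark, so it is consistent with, and in fact recovers, what Vogel proves. What each approach buys: yours is self-contained, elementary, and stays inside the paper's representation-theoretic framework, proving the stronger unconditional vanishing directly; the paper's route through Vogel's Theorem A3 yields finer information that your lifting argument does not --- the converse direction (Massey vanishing forces $R\subseteq F_{(n+1)}$), strict and unique definedness, and an explicit identification of Massey product values with Magnus coefficients of the relators.
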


\begin{rema} 
We use the terminology as in \cite{Vogel}.
    Let $G$ be a pro-$p$ group and $n\geq 2$ an integer. Suppose that $G=F/R$ where $F$ is a free pro-$p$ group on generators $x_1,\ldots,x_n$, and $R\subseteq F^p[F,F]$. The following  are equivalent:
    \begin{itemize}
        \item[i)] $R\subseteq F_{(n+1)}$.
        \item[ii)] All $k$-fold Massey products are stricly and uniquely defined and equal to 0, for $2\leq k\leq n$.
        \item[iii)] $G$ satisfies the strong $k$-fold Massey vanishing property for $2\leq k\leq n$.
        \item[iv)] $G$ satisfies the  $k$-fold Massey vanishing property for $2\leq k\leq n$.
    \end{itemize}
\end{rema}
\begin{proof}
The implication from $i)$ to $ii)$ follows from \cite[Theorem A3]{Vogel}.

The implications from $ii)$  to $iii)$ and from $iii)$ to $iv)$ are clear.

Now we suppose that $iv)$ holds. Let $\chi_1,\ldots,\chi_n\in H^1(F,\F_p)=H^1(G,\F_p)$ be the dual basis to  $x_1,\ldots,x_n$.
   That $G$ satisfies the  $2$-fold Massey vanishing property means that all cup products $\chi_{i_1}\cup \chi_{i_2}$ are zero, for $1\leq i_1,i_2\leq n$. By \cite[Theorem A3]{Vogel},
 for every $f\in R$ and every $1\leq i_1,i_2\leq n$, $I=(i_1,i_2)$, one has
   \[\epsilon_{I,p}(f)=(-1)^{2-1}{\rm tr}_f\langle \chi_{i_1},\chi_{i_2}\rangle =0.\]
   This implies that $f\in F_{(3)}$ by \cite[Lemma 2.19]{Vogel}, and hence $R\subseteq F_{(3)}$. 
   
   Now because $R\subseteq F_{(3)}$, we see that   for all $1\leq i_1,i_2,i_3\leq n$, triple Massey products $\langle \chi_{i_1},\chi_{i_2},\chi_{i_3}\rangle$ are well defined, by \cite[Theorem A3]{Vogel}. Thus $\langle \chi_{i_1},\chi_{i_2},\chi_{i_3}\rangle=0$ because $G$ satisfies the  $3$-fold Massey vanishing property. 
   Also by \cite[Theorem A3]{Vogel}, for every $f\in R$ and every $1\leq i_1,i_2,i_3\leq n$, $I=(i_1,i_2,i_3)$, one has
   \[\epsilon_{I,p}(f)=(-1)^{3-1}{\rm tr}_f\langle \chi_{i_1},\chi_{i_2},\chi_{i_3}\rangle =0.\] 
   This implies that $f\in R_{(4)}$ by \cite[Lemma 2.19]{Vogel}. Hence $R\subseteq R_{(4)}$.
   Continuing in this way, we show that $R\subseteq F_{(k+1)}$ for all $2\leq k\leq n$. In particular, $R\subseteq F_{(n+1)}$.
\end{proof}

To conclude, we give another situation where we can apply Theorem \ref{theo_vogel} .
 Take $T$ a finite set of primes of $K$, disjoint from $S$. Let $K_S^T$ be the maximal pro-$p$ extension of $K$ unramified outside $S$, with the primes of $T$ splitting completely in $K_S^T$. Set $\Gamma_S^T:=Gal(K_S^T/K)$.

 \begin{coro} Take $n\geq 3$. Let $K$ be a number field, not totally real, satisfying  Gras's conjecture.  Then for $p\gg 0$, there exists a set $T$ of primes of $K$, coprime to $p$, such that the pro-$p$ group $\Gamma_{S_p}^T$ is  infinite, \
 has finite abelianization 
 and satisfies the strong $n$-fold Massey property. 
\end{coro}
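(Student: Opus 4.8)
The plan is to combine Gras's conjecture (assumed to hold for $K$) with Vogel's result, Theorem~\ref{theo_vogel}, via a judicious choice of the splitting set $T$. By Gras's conjecture, for $p \gg 0$ the group $\Gamma_{S_p}$ is free pro-$p$ on $r_2+1$ generators, where $r_2 \geq 1$ since $K$ is not totally real. A nonabelian free pro-$p$ group is infinite and its relation set is empty, so it trivially satisfies the strong $n$-fold Massey property; but it is \emph{not} FAB, so we must cut it down. The idea is to impose splitting conditions at a finite set $T$ of auxiliary primes (coprime to $p$): forcing the Frobenius elements at $T$ to be trivial in $K_{S_p}^T$ amounts to quotienting the free group $\Gamma_{S_p}$ by the normal closure of the Frobenius elements $\mathrm{Fr}_{\q}$ for $\q \in T$. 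Thus $\Gamma_{S_p}^T$ is a one-relator-per-prime quotient $F/R$ of a free pro-$p$ group, where $R$ is generated by (lifts of) these Frobenius elements.

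First I would set up the presentation explicitly. Write $\Gamma_{S_p} \simeq F$, free on $x_1,\dots,x_{r_2+1}$. Each prime $\q \in T$, being tame and coprime to $p$, contributes a Frobenius conjugacy class whose image $\mathrm{Fr}_\q \in \Gamma_{S_p}$ is a well-defined element up to conjugacy; $\Gamma_{S_p}^T$ is then $F$ modulo the closed normal subgroup $R$ generated by $\{\mathrm{Fr}_\q : \q \in T\}$. The key leverage is that we get to \emph{choose} the primes $\q$ via Chebotarev's theorem: by selecting $\q$ whose Frobenius lands arbitrarily deep in the Zassenhaus (lower $p$-central) filtration of $F$, we can arrange every relation $\mathrm{Fr}_\q$ to have depth at least $n+1$. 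Concretely, Chebotarev applied to the finite quotient $F/F_{(n+1)}$ of $\Gamma_{S_p}$ lets us find primes $\q$ whose Frobenius is trivial in $F/F_{(n+1)}$, i.e.\ $\mathrm{Fr}_\q \in F_{(n+1)}$. Then $R \subseteq F_{(n+1)}$, and Theorem~\ref{theo_vogel} immediately yields the strong $n$-fold Massey property for $\Gamma_{S_p}^T$.

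It remains to secure the two group-theoretic properties: that $\Gamma_{S_p}^T$ is infinite and has finite abelianization (FAB). Finiteness of the abelianization follows from the general FAB property: when $S \cap S_p = \emptyset$ is replaced here by the splitting condition at $T$ disjoint from $S_p$, the relations $\mathrm{Fr}_\q$ impose enough conditions to finitize $H_1$; more precisely, imposing splitting at sufficiently many primes of $T$ kills the torsion-free part of the abelianization, exactly as the FAB statement at the start of the Deep Relations subsection asserts. For infinitude, I would control the number of relations against the number of generators: choosing $|T|$ strictly smaller than what would be needed to make $F/R$ finite — equivalently keeping the deficiency positive, or invoking that deep relations (depth $\geq n+1 \geq 4$) cannot collapse a free pro-$p$ group of rank $\geq 2$ to a finite group — guarantees $\Gamma_{S_p}^T$ is infinite. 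Since $r_2+1 \geq 2$ here, the free group is nonabelian, and finitely many relations of high depth leave it infinite.

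The main obstacle I anticipate is simultaneously satisfying the three requirements with a \emph{single} choice of $T$: the deep-relation condition (pushing $\mathrm{Fr}_\q$ into $F_{(n+1)}$ for the Massey property), finiteness of the abelianization (which wants \emph{enough} relations, and the right ones, to kill the free part of $H_1$), and infinitude (which wants \emph{few enough} relations). The tension is that making $H_1$ finite typically requires relations that are nontrivial modulo $F_{(2)}=F^p[F,F]$, whereas the Massey condition forces relations to lie in $F_{(n+1)} \subseteq F_{(2)}$, hence to be \emph{trivial} in $H_1$ — so these deep relations contribute nothing to finitizing the abelianization. Resolving this requires a more delicate Chebotarev argument: one must choose primes whose Frobenius is deep in the $p$-central filtration yet whose collective effect still rigidifies the Galois-module structure enough to finitize $H_1(\Gamma_{S_p}^T)$ through the arithmetic (e.g.\ via the interplay of local conditions and the structure theory of FAB pro-$p$ groups from \cite{HLMR}), rather than through the naive abelianized count. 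Verifying that such a compatible $T$ exists for $p \gg 0$, and that the resulting group is genuinely infinite, is where the real work lies; the Massey property itself is then a formal consequence of Theorem~\ref{theo_vogel}.
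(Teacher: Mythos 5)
There is a genuine gap at the heart of your argument, and you in fact point at it yourself without resolving it. Your framework (Gras's conjecture gives $\Gamma_{S_p}$ free pro-$p$ of rank $r_2+1\geq 2$; complete splitting at $T$ quotients by the normal closures of the Frobenius elements; Chebotarev pushes these Frobenius elements into $F_{(n+1)}$; Theorem~\ref{theo_vogel} then gives the Massey property) is exactly the paper's. But your claim that relations lying in $F_{(n+1)}\subseteq F_{(2)}=F^p[F,F]$ are ``trivial in $H_1$'' and hence ``contribute nothing to finitizing the abelianization'' is wrong, and this error is precisely what blocks you. Such relations are trivial only in the \emph{mod-$p$} abelianization $F/F^p[F,F]$, not in the integral abelianization $F^{ab}\simeq \Z_p^{\,r_2+1}$. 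The paper's resolution of the tension you describe exploits exactly this loophole: by Chebotarev one chooses the primes of $T$ so that their Frobenius elements are $p^n$-th powers $g_i^{p^n}$ of a set of (Frobenius) generators $g_1,\dots,g_{r_2+1}$ of $\Gamma_{S_p}$, possibly together with further primes whose Frobenius has Zassenhaus depth greater than $n+1$. Since $g^{p^n}\in F_{(p^n)}\subseteq F_{(n+1)}$ (as $p^n> n+1$), Theorem~\ref{theo_vogel} applies and yields the strong $n$-fold Massey property; yet the images $p^n\bar g_i$ span a finite-index subgroup of $F^{ab}$, so the abelianization of $\Gamma_{S_p}^T$ is a quotient of $(\Z/p^n)^{r_2+1}$ and hence finite.

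Two further points. Your appeal to ``the FAB statement at the start of the Deep Relations subsection'' is a misapplication: that statement concerns $\Gamma_S$ with $S\cap S_p=\emptyset$, whereas here the ramification set is $S_p$ itself and $\Gamma_{S_p}$ is free of rank at least $2$, so finiteness of the abelianization is not automatic and must be manufactured by the choice of $T$. And your infinitude argument is circular as stated: you propose to keep $|T|$ ``strictly smaller than what would be needed to make $F/R$ finite,'' but finitizing $\Z_p^{\,r_2+1}$ already forces at least $r_2+1$ relations, so one cannot win by merely counting relations against generators. The paper instead invokes the strategy of \cite{HMR}, where infinitude of the cut tower follows from a Golod--Shafarevich type estimate that weights each relation by its (large) depth $p^n$ in the Zassenhaus filtration.
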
 

\begin{proof} We apply the strategy of \cite{HMR}:  We may take the quotient of the free pro-$p$ group  $\Gamma_{S_p}$ (Gras' conjecture) by  any Frobenius elements whose depth in the Zassenhaus filtration is greater than $n+1$,  by $p^{n}$-powers of Frobenius elements that generate  $\Gamma_{S_p}$,  and apply Theorem~\ref{theo_vogel}. Chebotarev's theorem gives a positive density of such primes for our set~$T$.
\end{proof}


\end{document}